\newcommand{\zz}{\mathbb{Z}}
\newcommand{\qq}{\mathbb{Q}}
\newcommand{\gf}{\mathbb{F}}
\newcommand{\vect}{\mathbf}
\newcommand{\ints}{\mathcal{O}}
\newcommand{\legendre}[2]{\left(\frac{#1}{#2}\right)}
\newcommand{\floor}[1]{\left\lfloor#1\right\rfloor}
\newcommand{\ceil}[1]{\left\lceil#1\right\rceil}
\newcommand{\dnd}{\nmid}
\newcommand{\isom}{\cong}
\newcommand{\Aut}{\mathrm{Aut}}
\newcommand{\ord}{\mathrm{ord}}
\newcommand{\ecbox}{\mathscr{B}}
\newcommand{\Gal}{\mathrm{Gal}}
\newcommand{\N}{\mathrm{N}}
\newcommand{\p}{\mathfrak{p}}
\newtheorem{prop}{Proposition}
\newtheorem{thm}{Theorem}
\newtheorem{lma}{Lemma}
\newtheorem{cor}{Corollary}
\newtheorem{conj}{Conjecture}
\theoremstyle{remark}
\newtheorem*{rmk}{Remark}
\DeclareMathAlphabet{\mathpzc}{OT1}{pzc}{m}{it}
\newcommand{\MK}{\mathpzc m_K}
\newcommand{\NK}{\mathpzc n_K}
\newcommand{\NA}{\mathpzc n_\mathcal{A}}
\begin{document}
\title{Average Frobenius distribution for elliptic curves defined over finite Galois extensions of 
the rationals\\ 
(appeared in \textit{Mathematical Proceedings of the Cambridge Philosophical Society})}

\author[James]{Kevin James}
\address[Kevin James]{
Department of Mathematical Sciences\\
Clemson University\\
Box 340975 Clemson, SC 29634-0975
}
\email{kevja@clemson.edu}
\urladdr{www.math.clemson.edu/~kevja}

\author[Smith]{Ethan Smith}
\address[Ethan Smith]{
Department of Mathematical Sciences\\
Michigan Technological University\\
1400 Townsend Drive\\
Houghton, MI 49931-1295
}
\email{ethans@mtu.edu}
\urladdr{www.math.mtu.edu/~ethans}

\begin{abstract}
Let $K$ be a fixed number field, assumed to be Galois over $\qq$.  Let $r$ and $f$ be fixed integers with $f$ positive.  
Given an elliptic curve $E$, defined over $K$, we consider the problem of
counting the number of degree $f$ prime ideals of $K$ with trace of Frobenius equal 
to $r$.  Except in the case $f=2$, we show that ``on average," the number of such prime ideals 
with norm less than or equal to $x$ satisfies an asymptotic identity that is in accordance with 
standard heuristics.  This work is related to the classical Lang-Trotter conjecture and extends 
the work of several authors. 
\end{abstract}

\maketitle

\section{Introduction}

We begin by reviewing the classical case.
Let $E$ be an elliptic curve defined over the rational field $\qq$.  For a prime $p$ where 
$E$ has good reduction, we let 
$a_p(E)$ denote the trace of the Frobenius morphism.  Then $a_p(E)=p+1-\#E(\gf_p)$, and 
it was shown by Hasse that 
$|a_p(E)|\le 2\sqrt p$.  See~\cite[p. 131]{Sil:1986}.
Given a fixed elliptic curve $E$ and a fixed integer $r$, the prime counting function
\begin{equation}
\pi_E^r(x):=\#\{p\le x: a_p(E)=r\}
\end{equation}
has received a great deal of attention.  
Deuring~\cite{Deu:1941} showed that if $E$ admits complex multiplication, 
then half the primes are of 
\textit{supersingular} reduction, i.e., $a_p(E)=0$.  
In addition, the distribution of the non-supersingular primes was 
explained by Hecke~\cite{Hec:1918, Hec:1920} for elliptic curves with 
complex multiplication.
For the remaining cases, 
we have the following conjecture of Lang and Trotter~\cite{LT:1976}.
\begin{conj}[Lang-Trotter]\label{LT Conj}
Let $E$ be a fixed elliptic curve defined over $\qq$, and let $r$ be a fixed integer.  In the case 
that $E$ has complex multiplication, also assume that $r\neq 0$.  There exists a 
constant $C_{E,r}$ such that
\begin{equation}
\pi_{E}^r(x)\sim C_{E,r}\frac{\sqrt x}{\log x}
\end{equation}
as $x\rightarrow\infty$.
The constant $C_{E,r}$ may be zero, in which case the asymptotic is interpreted to mean that 
there are only finitely many such primes.
\end{conj}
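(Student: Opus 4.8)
The final statement is the Lang--Trotter conjecture itself, and no proof is known; what follows is the heuristic from which the conjectured shape and constant are extracted, together with the point at which that heuristic cannot presently be made rigorous.

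The plan is to read the event ``$a_p(E)=r$'' off from the Galois representations on torsion. For a positive integer $m$ let $\rho_m\colon\Gal(\qbar/\qq)\to\GL_2(\zz/m\zz)$ be the representation on $E[m]$, let $G_m$ denote its image, and set
\begin{equation*}
C_m(r)=\{g\in G_m:\ \operatorname{tr}g\equiv r\pmod m\},
\end{equation*}
a conjugation-stable subset. For a prime $p$ of good reduction and unramified in $\qq(E[m])$, the Frobenius class at $p$ lies in $C_m(r)$ precisely when $a_p(E)\equiv r\pmod m$ (the determinant of $\rho_m(\operatorname{Frob}_p)$ being automatically $\equiv p\pmod m$), so the Chebotarev density theorem applied to $\qq(E[m])/\qq$ gives
\begin{equation*}
\#\{p\le x:\ a_p(E)\equiv r\ (\mathrm{mod}\ m)\}\sim\frac{\#C_m(r)}{\#G_m}\,\pi(x).
\end{equation*}

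Next one lets $m$ range over integers supported on ever larger finite sets of primes and invokes Serre's open image theorem: for $E$ without complex multiplication, $G_{\ell^k}=\GL_2(\zz/\ell^k\zz)$ for all but finitely many $\ell$ and the $\ell$-primary pieces behave essentially independently, so that $m\,\#C_m(r)/\#G_m$ tends to a product $\prod_\ell c_\ell(r)$; in the CM case $G_m$ lies, up to bounded index, in the normaliser of a Cartan subgroup, and the same factorisation holds with the corresponding local densities. Coupling these congruence densities with the Sato--Tate law for the size of $a_p(E)$ inside the Hasse interval $[-2\sqrt p,2\sqrt p]$ yields the heuristic probability
\begin{equation*}
\operatorname{Prob}\bigl(a_p(E)=r\bigr)\ \approx\ \frac{1}{\pi\sqrt p}\prod_\ell c_\ell(r),\qquad c_\ell(r)=\lim_{k\to\infty}\ell^k\,\frac{\#C_{\ell^k}(r)}{\#G_{\ell^k}},
\end{equation*}
since among the $\approx 4\sqrt p/m$ integers in the Hasse interval that are $\equiv r\pmod m$ the value $r$ is selected with Sato--Tate weight tending to $\tfrac2\pi$. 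The factors $c_\ell(r)$ are elementary to compute (outside the exceptional set they depend only on $r$ modulo $\ell$, on whether $\ell\mid r$, and on the splitting of $p$ recorded by $\legendre{r^2-4p}{\ell}$), and $\prod_\ell c_\ell(r)$ converges. Summing over $p\le x$ with $\sum_{p\le x}p^{-1/2}\sim 2\sqrt x/\log x$ then produces $\pi_E^r(x)\sim C_{E,r}\sqrt x/\log x$ with $C_{E,r}=\tfrac2\pi\prod_\ell c_\ell(r)$ (up to the precise normalisation of the local factors). This constant vanishes exactly when some $c_\ell(r)=0$, which is the source of the ``finitely many primes'' clause and, in the complex multiplication case, of the exclusion of $r=0$ already visible in Deuring's result.

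The main obstacle --- indeed, the reason the conjecture remains open --- is the error term in the Chebotarev density theorem. To distinguish $a_p(E)=r$ from the mere congruence $a_p(E)\equiv r\pmod m$ one must take $m$ of size comparable to $\sqrt x$, so that distinct admissible traces attached to primes up to $x$ are separated modulo $m$; but even under GRH the error in the displayed Chebotarev estimate for $\qq(E[m])/\qq$ grows at least like a fixed positive power of $m$ times $\sqrt x$ (through the degree and the log-discriminant of $\qq(E[m])$, both polynomial in $m$), so with $m\asymp\sqrt x$ the accumulated error is a positive power of $x$, dwarfing the predicted main term of size $\sqrt x/\log x$. No sieve or analytic refinement currently bridges this gap. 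Accordingly the rigorous state of the art is far weaker: Serre's bound $\pi_E^r(x)=o(x)$, the conditional and unconditional power-saving upper bounds of Serre, of Murty--Murty--Saradha, and of Elkies, and averaged forms of the full asymptotic in the spirit of the present paper. For a single non-CM curve and a single $r\ne0$ it is not even known that $\pi_E^r(x)\to\infty$.
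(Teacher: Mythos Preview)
You correctly identify that this statement is an open conjecture, not a theorem; the paper does not prove it either, but simply records it as a conjecture with a reference to Lang and Trotter. Your exposition of the standard Galois-representation and Sato--Tate heuristic, together with the Chebotar\"ev error-term obstruction, is accurate and appropriate, and there is nothing in the paper against which to compare a proof.
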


The theme of studying this conjecture ``on average" was initiated by 
Fouvry and Murty in~\cite{FM:1996} who considered the case when $r=0$.  
The density of complex multiplication curves is so little that they do not affect 
the asymptotic.
Their work was generalized by 
David and Pappalardi~\cite{DP:1999} who considered the remaining cases.
This was later improved by Baier~\cite{Bai:2007} 
who showed that the result also holds over a ``shorter average."
Finer averages have also been considered.  In~\cite{Jam:2004}, the first author considered 
the problem when the average was restricted to curves admitting a rational 3-torsion point.
Averages over families of elliptic curves with various prescribed torsion structures were 
considered in~\cite{BBIJ:2005}.

We now turn to the number field case.
Suppose that $K$ is a number field and $E$ is an elliptic curve defined over $K$.
Given a prime ideal $\p$ of the ring of integers $\ints_K$ where $E$ has good reduction,
we define the trace of Frobenius $a_\p(E)$ as before.
In particular, we have $a_\p(E)=\N\p+1-\#E(\ints_K/\p)$ and 
$|a_\p(E)|\le 2\sqrt{\N\p}=2p^{f/2}$.  Here, $\N\p:=\#(\ints_K/\p)=p^f$ is the norm of $\p$, $p$ is 
the unique rational prime lying below $\p$, and $f=\deg\p$ is the absolute degree of $\p$.
For a fixed elliptic curve $E$ and fixed integers $r$ and $f$, we define the prime counting 
function 
\begin{equation}
\pi_E^{r,f}(x):=\#\{\N\p\le x: a_\p(E)=r\text{ and } \deg\p=f\}.
\end{equation}
For elliptic curves defined over a number field $K$, the heuristics of Lang and 
Trotter~\cite{LT:1976} suggest the following more refined conjecture.
See~\cite{DP:2004} also.
\begin{conj}[Lang-Trotter for number fields]\label{LT for K}
Let $E$ be a fixed elliptic curve defined over $K$, and let $r$ be a fixed integer.
In the case that $E$ has complex multiplication, also assume that $r\ne 0$.
Let $f$ be a positive integer.
There exists a constant $\mathfrak{C}_{E,r,f}$ such that
\begin{equation}
\pi_E^{r,f}(x)\sim\mathfrak{C}_{E,r,f}\begin{cases}
\frac{\sqrt x}{\log x}& \text{if }f=1,\\
\log\log x& \text{if }f=2,\\
1& \text{if }f\ge 3
\end{cases}
\end{equation}
as $x\rightarrow\infty$.
The constant $\mathfrak{C}_{E,r,f}$ may be zero, in which case the asymptotic is interpreted 
to mean that there are only finitely many such primes.
\end{conj}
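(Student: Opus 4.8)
The plan is to attack Conjecture~\ref{LT for K} by the Galois-theoretic mechanism that Lang and Trotter used to formulate it, made quantitative. Assume first that $E$ has no complex multiplication; the CM case reduces, for $r\neq 0$, to the equidistribution of prime ideals with prescribed Hecke-character data in the relevant imaginary quadratic field, along the lines of Deuring and Hecke. For each rational prime $\ell$ let $\rho_{E,\ell}\colon\Gal(\overline{K}/K)\to\GL_2(\zz_\ell)$ be the $\ell$-adic Galois representation attached to $E$, and for each $m\geq 1$ let $K(E[m])$ be the $m$-division field. By Serre's open image theorem the image of $\prod_\ell\rho_{E,\ell}$ in $\prod_\ell\GL_2(\zz_\ell)$ is open, so $[K(E[m]):K]$ agrees with $\#\GL_2(\zz/m\zz)$ up to a bounded factor, uniformly in $m$. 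Since $K/\qq$ is Galois, every rational prime $p$ unramified in $K$ has all its prime factors of one common degree $f_p=\ord(\mathrm{Frob}_p|_K)$, with $[K:\qq]/f_p$ of them, so one may rewrite
\begin{equation}
\pi_E^{r,f}(x)=\sum_{\substack{p\leq x^{1/f}\\ \ord(\mathrm{Frob}_p|_K)=f}}\#\{\p\mid p:\, a_\p(E)=r\}+O(1).
\end{equation}
For each modulus $m$, the joint condition ``$\deg\p=f$ and $a_\p(E)\equiv r\pmod m$'' is a Chebotarev condition: $\mathrm{Frob}_p$ must lie in an explicit union $\mathscr{C}_{m,r,f}$ of conjugacy classes of $\Gal(L_m/\qq)$, where $L_m$ is the Galois closure over $\qq$ of $K(E[m])$.

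The main term would then be extracted by sieving over $m$. By Hasse's inequality $|a_\p(E)|\leq 2\sqrt{\N\p}=2p^{f/2}$, so for all but finitely many $\p$ one has $a_\p(E)=r$ if and only if $a_\p(E)\equiv r\pmod m$ for every $m\leq 4p^{f/2}$; thus the indicator of the event $a_\p(E)=r$ is resolved by an inclusion--exclusion (equivalently, a large sieve) over moduli up to the length of the Hasse interval. Serre's theorem, together with the elementary fact that the number of trace-$r$ matrices in $\GL_2(\zz/m\zz)$ is $\asymp\#\GL_2(\zz/m\zz)/m$, shows that the density of $\mathscr{C}_{m,r,f}$ in $\Gal(L_m/\qq)$ is $\asymp c_{K,r,f}(m)/m$ with $c_{K,r,f}$ multiplicative and bounded; assembling these local densities produces the candidate constant $\mathfrak{C}_{E,r,f}$ as an absolutely convergent Euler-type product, which vanishes precisely when a local obstruction (a forced congruence coming from the Galois image, or the parity constraint at $r=0$ in the CM case) makes $a_\p(E)=r$ impossible. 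Summing the expected contribution $\asymp 1/p^{f/2}$ over primes $p\leq x^{1/f}$ with the prescribed splitting in $K$, partial summation then yields $\sqrt x/\log x$ for $f=1$, $\log\log x$ for $f=2$, and a convergent total $\asymp 1$ for $f\geq 3$, matching exactly the three regimes of the conjecture.

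The hard part, and the reason the statement remains a conjecture rather than a theorem, is the error analysis: making the sieve rigorous demands an effective Chebotarev density theorem for $L_m/\qq$ whose error is simultaneously uniform in $m$ and power-saving in $x$, valid for $m$ as large as the length of the Hasse interval --- that is, $m$ up to a constant times $\sqrt{\N\p}$, hence up to roughly $\sqrt{x}$. But $[L_m:\qq]\gg m^{3}$, so the unconditional effective Chebotarev theorem of Lagarias and Odlyzko is far too weak here, and even under the Generalized Riemann Hypothesis the resulting error (governed by the degree and the discriminant of $L_m$) is admissible only for $m$ up to a small fixed power of $x$. This yields at best a conditional upper bound falling well short of the conjectured count --- in the case $f=1$, for instance, Serre's argument gives $\pi_E^{r,1}(x)=O(x^{4/5+\varepsilon})$ under GRH --- but not the main term; equidistribution over the full Hasse interval with errors that survive the outer summation over $p$ is beyond current technology. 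The standard way around the impasse is to reinsert the missing cancellation by averaging $E$ over a family of curves, so that an additional large-sieve variable plays the role that GRH cannot --- which, for $f\neq 2$, is precisely what the present paper carries out.
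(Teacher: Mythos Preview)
The statement you were asked to prove is a \emph{conjecture}, and the paper does not prove it; it is stated without proof and then studied on average via Theorems~\ref{avg LT for K}--\ref{higher degree avg}. There is therefore no ``paper's own proof'' to compare against.

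Your proposal is not a proof either, and you say so yourself: you correctly isolate the obstruction (effective Chebotar\"ev for $L_m/\qq$ with $m$ up to the Hasse length, where $[L_m:\qq]\gg m^3$) and conclude that the main term cannot be extracted even under GRH. As a heuristic derivation of the three growth regimes in the conjecture, your outline is sound and in the spirit of the Lang--Trotter reasoning the paper cites; your final sentence accurately describes what the paper actually does instead. Just be aware that what you have written is motivation for the conjecture, not progress toward a proof, and should not be presented as the latter.
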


\begin{rmk}
For a fixed $f\ge 3$, 
we interpret the conjecture to say that there are only finitely 
many such primes.  In this case, the constant $\mathfrak{C}_{E,r,f}$ would 
necessarily be a nonnegative integer.
\end{rmk}

This conjecture too has been studied on average.  David and 
Pappalardi~\cite{DP:2004} considered the case when $K=\qq(i)$ and $f=2$.
A recent paper of Calkin, Faulkner, King, Penniston, 
and the first author~\cite{FJKP} extended their work to the setting of an 
arbitrary number field $K$ assumed to be Abelian over $\qq$.  In fact, the authors of~\cite{FJKP} considered any positive integer $f$ and obtained asymptotics in accordance with the 
conjecture.

The purpose of the present paper is to improve the work of~\cite{FJKP} in two ways.  
In the first place, we will relax the assumption that the number field $K$ is Abelian over 
$\qq$ provided that $f\neq 2$.   Instead, we will assume that $K$ is Galois over $\qq$.
The case $f=2$ remains somewhat elusive.  However, the authors are currently pursuing this case.
The second improvement is that we will consider a ``more general average" which will allow us to 
show that Conjecture~\ref{LT for K} still holds on average when averaging over a ``smaller"
set of elliptic curves.

\section{Statement of results}

For the remainder of the paper, we will assume that $K$ is a fixed number field.  In addition, we 
assume that the extension $K/\qq$ is Galois.  
We denote 
the degree of the extension by $\NK:=[K:\qq]$.
Recall that $\ints_K$ is a free $\zz$-module of rank $\NK$ and
let $\mathcal{B}=\{\gamma_j\}_{j=1}^{\NK}$ be a fixed integral basis for $\ints_K$.
We denote the coordinate map for the basis $\mathcal{B}$ by 
\begin{equation*}
[\cdot]_{\mathcal{B}}:\ints_K\stackrel{\sim}{\longrightarrow}
\bigoplus_{j=1}^{\NK}\zz=\zz^{\NK}.
\end{equation*}
Given two vectors $\vect a,\vect b\in\zz^{\NK}$, if each entry of $\vect a$ 
is less than or equal to the corresponding entry of $\vect b$, then we 
write $\vect a\le\vect b$.
If $\vect b\ge \vect 0$, 
then we define a ``box" of algebraic integers by
\begin{equation}\label{int box}
\mathcal{B}(\vect a, \vect b):=
\{\alpha\in\ints_K: \vect a-\vect b\le [\alpha]_\mathcal{B}\le \vect a+\vect b\}.
\end{equation}
For two algebraic integers $\alpha,\beta\in\ints_K$, we write $E_{\alpha,\beta}$ 
for the elliptic curve given by the model
\begin{equation*}
E_{\alpha,\beta}: Y^2=X^3+\alpha X+\beta.
\end{equation*}
Then for appropriate vectors, we define a ``box" of elliptic curves by
\begin{equation}
\ecbox:=\ecbox(\vect a_1,\vect b_1;\vect a_2,\vect b_2)
=\{E_{\alpha,\beta}: \alpha\in\mathcal{B}(\vect a_1,\vect b_1)\text{ and }
\beta\in\mathcal{B}(\vect a_2,\vect b_2)\}.
\end{equation}
To be more precise, this box should be thought of as a box of equations or models 
since the same elliptic curve may appear multiple times in $\ecbox$.
For $i=1,2$, let $b_{i,j}$ denote 
the $j$-th entry of $\vect b_i$.
Associated to box $\mathscr{B}$, we define the quantities
\begin{align}
\mathscr{V}(\ecbox)&=2^{2\NK}\prod_{j=1}^{\NK}b_{1,j}*b_{2,j},\\
\mathscr{V}_1(\ecbox)&=2^{\NK}\prod_{j=1}^{\NK}b_{1,j},\\
\mathscr{V}_2(\ecbox)&=2^{\NK}\prod_{j=1}^{\NK}b_{2,j},\\
\mathscr{V}_{\text{min}}(\ecbox)&=2\min_{1\le j\le\NK}\{b_{1,j},b_{2,j}\},
\end{align}
which give a description of the size of this box.
In particular, 
\begin{equation*}
\#\ecbox=\mathscr{V}(\ecbox)
+O\left(\mathscr{V}(\ecbox)/\mathscr{V}_{\min}(\ecbox)\right).
\end{equation*}

Recall that
\begin{equation}\label{pi1/2}
\pi_{1/2}(x):=\int_2^x\frac{dt}{2\sqrt t\log t}\sim\frac{\sqrt x}{\log x}.
\end{equation}
We are now ready to state the main results of this paper.

\begin{thm}\label{avg LT for K}
Let $r$ be a fixed integer.  Then, for any $\eta>0$,
\begin{equation*}
\frac{1}{\#\mathscr{B}}\sum_{E\in\mathscr{B}}\pi_{E}^{r,1}(x)=
\mathfrak{C}_{K,r,1}\pi_{1/2}(x)
+\mathcal{E}(x;\mathscr{B}),
\end{equation*}
where
\begin{equation*}
\mathcal{E}(x;\ecbox)\ll\frac{\sqrt x}{(\log x)^{1+\eta}}
+\frac{\sqrt x/\log x}{\mathscr{V}_{\min}(\ecbox)}
+\left(\frac{1}{\mathscr{V}_1(\ecbox)}+\frac{1}{\mathscr{V}_2(\ecbox)}\right)(x\log x)^{\NK}
+\frac{(x\log x)^{2\NK}}{\mathscr{V}(\ecbox)},
\end{equation*}
and $\mathfrak{C}_{K,r,1}$ is the constant defined by the absolutely convergent 
sum~\eqref{avg const} on page~\pageref{avg const}.
\end{thm}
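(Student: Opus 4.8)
The plan is to follow the standard average-Lang--Trotter strategy, but carefully tracking the dependence of all error terms on the dimensions of the box $\ecbox$. First I would fix a degree-one prime $\p$ of $K$ lying above a rational prime $p$, so that $\ints_K/\p\cong\gf_p$, and reduce the question ``for how many $E\in\ecbox$ is $a_\p(E)=r$?'' to a congruence count. Reduction modulo $\p$ sends the pair $(\alpha,\beta)\in\ints_K^2$ to a pair in $\gf_p^2$, and the box $\mathcal{B}(\vect a_i,\vect b_i)$ maps onto residues in a way governed by the coordinate map $[\cdot]_\mathcal{B}$; counting lattice points in a box that reduce to a fixed residue class modulo $p$ introduces the main-term density $1/p^{2}$ together with an error of size $O\!\big(\text{(box volume)}/(p\cdot \mathscr{V}_{\min})\big)$ in each coordinate, which is where the terms $(x\log x)^{\NK}/\mathscr{V}_i$ and $(x\log x)^{2\NK}/\mathscr{V}(\ecbox)$ will ultimately come from after summing over $p\le x$. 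Thus
\begin{equation*}
\frac{1}{\#\ecbox}\sum_{E\in\ecbox}\pi_E^{r,1}(x)
=\sum_{\substack{p\le x\\ \text{suitable}}}\ \frac{1}{p^{2}}\,\#\{(\bar\alpha,\bar\beta)\in\gf_p^2: a_p(E_{\bar\alpha,\bar\beta})=r,\ \Delta\ne 0\}\ +\ (\text{box errors}),
\end{equation*}
where I must also sum over the number of degree-one primes of $K$ above each such $p$ — by the Chebotarev density theorem (here is where Galois-ness of $K/\qq$ enters) the rational primes that split completely contribute, and for $f=1$ only totally split primes matter.

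The second step is to evaluate the inner Frobenius count over $\gf_p$. This is classical: by Deuring's theorem the number of isomorphism classes of elliptic curves over $\gf_p$ with trace $r$ (for $r^2<4p$, $p\nmid r$) is a Hurwitz--Kronecker class number $H(4p-r^2)$, and weighting by automorphisms gives $\#\{(\bar\alpha,\bar\beta): a_p=r\}= p\,H(4p-r^2)+O(1)$ or the analogous expression; dividing by $p^2$ and using the average size $H(4p-r^2)\asymp \sqrt p$ produces a summand of order $1/\sqrt p$, so $\sum_{p\le x}1/\sqrt p\sim 2\sqrt x/\log x$, matching $\pi_{1/2}(x)$. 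The key arithmetic input is the Eichler--Selberg/class-number formula expressing $H(4p-r^2)$ via values of Dirichlet $L$-functions (or Hurwitz class numbers as sums over divisors), which lets me write the main term as a sum over $p$ of a multiplicative-looking quantity whose average over $p$ I can compute. Carefully assembling the local densities at every prime $\ell$ — including the primes that ramify or do not split in $K$, which is exactly the place the field $K$ enters the constant — yields the absolutely convergent Euler-type product/sum $\mathfrak{C}_{K,r,1}$ referenced as \eqref{avg const}; absolute convergence follows from the standard $1+O(\ell^{-2})$ bound on the local factors.

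The third step, and the one I expect to be the main obstacle, is controlling the error when one replaces the sum over $p\le x$ of $H(4p-r^2)$-type terms by its expected main term $\mathfrak{C}_{K,r,1}\,\pi_{1/2}(x)$. After opening up the class number via the class-number formula, this becomes a question about the average over $p\le x$ of character sums / short Kloosterman-type or divisor sums twisted by the splitting condition on $p$ in $K$; one shows cancellation using a large-sieve or Barban--Davenport--Halberstam-type estimate, and the savings $(\log x)^{-\eta}$ reflects the limit of what such an average argument gives unconditionally. Handling the Chebotarev condition ``$p$ splits completely in $K$'' simultaneously with this averaging is precisely the technical novelty needed to go from $K/\qq$ abelian (as in~\cite{FJKP}, where one has explicit Dirichlet characters) to $K/\qq$ merely Galois: one must feed the Chebotarev/effective-density machinery through the large sieve, and the condition $f\ne 2$ is what keeps the relevant main term of size $\sqrt x/\log x$ rather than the more delicate $\log\log x$. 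Finally I would collect the box errors from step one, bound $H(4p-r^2)\ll \sqrt p\log p$ trivially inside those error terms to get the factors $(x\log x)^{\NK}$ and $(x\log x)^{2\NK}$, and combine everything into the stated estimate for $\mathcal{E}(x;\ecbox)$.
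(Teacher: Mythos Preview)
Your three-step outline---reduce to a per-prime count of models in $\ecbox$ with prescribed reduction, convert via Deuring's theorem to a Hurwitz--Kronecker class-number sum, then open the class numbers by Dirichlet's formula and average the resulting $L(1,\chi_{d_k(p)})$ over completely split primes using Chebotar\"ev together with a Barban--Davenport--Halberstam-type estimate for $K$---is exactly the paper's strategy (Propositions~\ref{1st avg order prop}, \ref{2nd avg order prop}, and~\ref{weighted avg of l-series}, with Theorem~\ref{bdh gen} supplying the BDH input). You have also correctly located the passage from abelian to general Galois $K/\qq$: the splitting condition is handled by lifting back to prime ideals of $K$ and applying Chebotar\"ev to $K(\zeta_q)/K$.

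There is, however, a gap in your account of step one. Naive lattice-point counting in the box gives only the main term $\mathscr{V}(\ecbox)/p^2$ and the error $O\bigl(\mathscr{V}(\ecbox)/(p\,\mathscr{V}_{\min}(\ecbox))\bigr)$; after weighting by $H(r^2-4p)$ and summing over $p\le x$ this accounts only for the $\sqrt{x}/(\log x\cdot\mathscr{V}_{\min})$ piece of $\mathcal{E}(x;\ecbox)$. The terms $(x\log x)^{\NK}/\mathscr{V}_i(\ecbox)$ and $(x\log x)^{2\NK}/\mathscr{V}(\ecbox)$ do \emph{not} arise this way, contrary to what you write. To obtain them one must count $\#\ecbox(\tilde E,\p)$ by detecting the isomorphism condition $(\alpha,\beta)\equiv(au^4,bu^6)\pmod{\p}$ with additive characters on $\ints_K/\p\cong\gf_p$, apply Weil's bound $O(\sqrt p)$ to the complete sum $\sum_{u\in\gf_p^*}\psi(au^4+mbu^6)$, and bound the incomplete sums $\sum_{\psi\ne\psi_0}\bigl|\sum_{\alpha\in\mathcal{B}(\vect a_1,\vect b_1)}\psi(\alpha)\bigr|\ll p^{\NK}(\log p)^{\NK}$ by factoring over the integral-basis coordinates. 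This is the content of Lemma~\ref{count isom reductions}; without it the stated shape of $\mathcal{E}(x;\ecbox)$ is inaccessible.
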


As an immediate corollary of Theorem~\ref{avg LT for K}, we have the following.
\begin{cor}\label{main cor 1}
Let $\eta>0$, and let $r$ be a fixed integer.  Then
\begin{equation*}
\frac{1}{\#\ecbox}\sum_{E\in\ecbox}\pi_{E}^{r,1}(x)\sim
\mathfrak{C}_{K,r,1}\pi_{1/2}(x),
\end{equation*}
provided that the box $\ecbox$ satisfies the growth conditions: 
\begin{align*}
\mathscr{V}(\ecbox)&\gg x^{2\NK-1/2}(\log x)^{2\NK+1+\eta},\\
\mathscr{V}_1(\ecbox),\mathscr{V}_2(\ecbox)&\gg x^{\NK-1/2}(\log x)^{\NK+1+\eta},\\
\mathscr{V}_{\mathrm{min}}(\ecbox)&\gg(\log x)^{\eta}.
\end{align*}
\end{cor}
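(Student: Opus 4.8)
The plan is to derive Corollary~\ref{main cor 1} directly from Theorem~\ref{avg LT for K}. Fixing $\eta>0$ and invoking the theorem with this same $\eta$, it remains only to verify that the growth conditions imposed on $\ecbox$ force each of the four summands in the bound for $\mathcal{E}(x;\ecbox)$ to be $o(\pi_{1/2}(x))$ as $x\to\infty$. Since $\pi_{1/2}(x)\sim\sqrt x/\log x$ by~\eqref{pi1/2}, this reduces to checking $\mathcal{E}(x;\ecbox)=o(\sqrt x/\log x)$.

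The first summand, $\sqrt x/(\log x)^{1+\eta}$, is already $o(\sqrt x/\log x)$. For the second, the hypothesis $\mathscr{V}_{\min}(\ecbox)\gg(\log x)^{\eta}$ gives
\[
\frac{\sqrt x/\log x}{\mathscr{V}_{\min}(\ecbox)}\ll\frac{\sqrt x}{(\log x)^{1+\eta}}.
\]
For the third, the hypotheses $\mathscr{V}_1(\ecbox),\mathscr{V}_2(\ecbox)\gg x^{\NK-1/2}(\log x)^{\NK+1+\eta}$ yield
\[
\left(\frac{1}{\mathscr{V}_1(\ecbox)}+\frac{1}{\mathscr{V}_2(\ecbox)}\right)(x\log x)^{\NK}\ll\frac{(x\log x)^{\NK}}{x^{\NK-1/2}(\log x)^{\NK+1+\eta}}=\frac{\sqrt x}{(\log x)^{1+\eta}}.
\]
For the fourth, the hypothesis $\mathscr{V}(\ecbox)\gg x^{2\NK-1/2}(\log x)^{2\NK+1+\eta}$ gives
\[
\frac{(x\log x)^{2\NK}}{\mathscr{V}(\ecbox)}\ll\frac{(x\log x)^{2\NK}}{x^{2\NK-1/2}(\log x)^{2\NK+1+\eta}}=\frac{\sqrt x}{(\log x)^{1+\eta}}.
\]
Hence all four terms are $\ll\sqrt x/(\log x)^{1+\eta}=o(\sqrt x/\log x)$, so $\mathcal{E}(x;\ecbox)=o(\pi_{1/2}(x))$ and the asymptotic in the corollary follows from Theorem~\ref{avg LT for K}.

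No genuine obstacle arises here: the three box-dependent growth conditions are calibrated precisely so that each of the corresponding error terms collapses to the same shape $\sqrt x/(\log x)^{1+\eta}$ as the unavoidable first term of $\mathcal{E}(x;\ecbox)$, and the implied constants in the $\gg$ hypotheses are absorbed by taking $x$ sufficiently large. The only point meriting attention is the bookkeeping of the powers of $x$ and of $\log x$ in the quotients above; everything else is immediate from the theorem.
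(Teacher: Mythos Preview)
Your proof is correct and follows exactly the intended route: the paper states only that this is ``an immediate corollary of Theorem~\ref{avg LT for K}'' without writing out any details, and you have supplied precisely the verification that each summand of $\mathcal{E}(x;\ecbox)$ is $\ll\sqrt x/(\log x)^{1+\eta}=o(\pi_{1/2}(x))$ under the stated growth conditions.
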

\begin{rmk}
The ``growth rate" of the box $\ecbox$ is much smaller than that of the 
corresponding box in~\cite{FJKP}.
\end{rmk}

We also consider the mean square error or how much the function $\pi_E^{r,1}(x)$ varies from the 
average $\mathfrak{C}_{K,r,1}\pi_{1/2}(x)$.
\begin{thm}\label{variance thm}
Let $\eta>0$.  Then
\begin{equation*}
\frac{1}{\#\mathscr{B}}\sum_{E\in\ecbox}
\left|\pi_E^{r,1}(x)-\mathfrak{C}_{K,r,1}\pi_{1/2}(x)\right|^2
\ll\frac{ x}{(\log x)^{2+\eta}},
\end{equation*}
provided that the box $\ecbox$ satisfies the growth conditions:
\begin{align*}
\mathscr{V}(\ecbox)&\gg x^{4\NK-1}(\log x)^{4\NK+2+\eta},\\
\mathscr{V}_1(\ecbox),\mathscr{V}_2(\ecbox)&\gg x^{2\NK-1}(\log x)^{2\NK+2+\eta},\\
\mathscr{V}_{\mathrm{min}}(\ecbox)&\gg(\log x)^{\eta}.
\end{align*}
\end{thm}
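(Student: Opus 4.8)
The plan is to open the square and reduce the mean-square estimate to a double sum over pairs of prime ideals, mirroring the standard variance computation in the average Lang--Trotter literature (Fouvry--Murty, David--Pappalardi) adapted to the number-field box $\ecbox$. First I would write
\begin{equation*}
\frac{1}{\#\ecbox}\sum_{E\in\ecbox}\left|\pi_E^{r,1}(x)-\mathfrak{C}_{K,r,1}\pi_{1/2}(x)\right|^2
=S_2-2\,\mathfrak{C}_{K,r,1}\pi_{1/2}(x)\,S_1+\mathfrak{C}_{K,r,1}^2\pi_{1/2}(x)^2,
\end{equation*}
where $S_1=\frac{1}{\#\ecbox}\sum_{E}\pi_E^{r,1}(x)$ is exactly the first moment already handled by Theorem \ref{avg LT for K}, and $S_2=\frac{1}{\#\ecbox}\sum_{E}\bigl(\pi_E^{r,1}(x)\bigr)^2$ is the new quantity. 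Theorem \ref{avg LT for K} gives $S_1=\mathfrak{C}_{K,r,1}\pi_{1/2}(x)+\mathcal E(x;\ecbox)$, and under the stated (stronger) growth conditions the error $\mathcal E(x;\ecbox)$ is $\ll\sqrt x/(\log x)^{1+\eta/2}$, so the cross term and the square term combine to $\mathfrak{C}_{K,r,1}^2\pi_{1/2}(x)^2+O\!\bigl(x/(\log x)^{2+\eta}\bigr)$ plus a leftover $-\mathfrak{C}_{K,r,1}^2\pi_{1/2}(x)^2$ that must cancel against the main term of $S_2$. Thus everything reduces to showing
\begin{equation*}
S_2=\mathfrak{C}_{K,r,1}^2\,\pi_{1/2}(x)^2+O\!\left(\frac{x}{(\log x)^{2+\eta}}\right).
\end{equation*}

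The heart of the argument is the analysis of $S_2$. Expanding the square of the counting function,
\begin{equation*}
\sum_{E\in\ecbox}\bigl(\pi_E^{r,1}(x)\bigr)^2
=\sum_{\substack{\N\p_1,\N\p_2\le x\\ \deg\p_1=\deg\p_2=1}}
\#\{E\in\ecbox: a_{\p_1}(E)=a_{\p_2}(E)=r\}.
\end{equation*}
Here I would split the diagonal terms $\p_1=\p_2$ from the off-diagonal terms. The diagonal contributes, after dividing by $\#\ecbox$, a quantity of size $O(\pi_E^{r,1}$-type main term$)=O(\sqrt x/\log x)$, which is negligible against $x/(\log x)^{2+\eta}$. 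For the off-diagonal terms with distinct primes $\p_1\neq\p_2$, the key input is a counting lemma of the same flavor as the one underlying Theorem \ref{avg LT for K}: the number of curves $E_{\alpha,\beta}\in\ecbox$ with prescribed traces at two distinct degree-one primes $\p_1,\p_2$ is, by the Chinese Remainder Theorem applied to the coordinate lattice $[\cdot]_{\mathcal B}$ modulo $p_1$ and $p_2$ (where $p_i$ is the rational prime below $\p_i$), equal to
\begin{equation*}
\frac{\#\ecbox}{(\text{local density at }\p_1)(\text{local density at }\p_2)}
+(\text{error from incomplete boxes}),
\end{equation*}
with the two local factors being precisely the ones whose product over all primes assembles $\mathfrak{C}_{K,r,1}$. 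Multiplicativity of these local densities across the two distinct rational primes is what makes the main term of the off-diagonal sum factor as $\bigl(\sum_{\p}(\text{density})\bigr)^2\pi_{1/2}(x)^2=\mathfrak{C}_{K,r,1}^2\pi_{1/2}(x)^2$, after inserting the prime-counting asymptotic for degree-one primes and using partial summation exactly as in the proof of the first moment.

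The main obstacle, and where most of the work lies, is controlling the accumulated error from the ``incomplete box'' contributions when summing over \emph{pairs} $(\p_1,\p_2)$ with $\N\p_i\le x$. Each pair contributes an error governed by the sizes $\mathscr V_1,\mathscr V_2,\mathscr V,\mathscr V_{\min}$ relative to the moduli $p_1,p_2$ (roughly of size $x$ each), so terms of shape $(x\log x)^{2\NK}/\mathscr V_i$ and $(x\log x)^{4\NK}/\mathscr V$ appear; summing over the $\ll(x/\log x)^2$ pairs of primes is exactly what forces the growth conditions $\mathscr V(\ecbox)\gg x^{4\NK-1}(\log x)^{4\NK+2+\eta}$ and $\mathscr V_i(\ecbox)\gg x^{2\NK-1}(\log x)^{2\NK+2+\eta}$ in the statement. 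I would also need, as in Theorem \ref{avg LT for K}, an effective form of a Chebotarev-type equidistribution statement (e.g. an average/large-sieve bound over the relevant division fields) to handle the sum of local densities over primes with error $\ll x/(\log x)^{2+\eta}$; obtaining this with the correct power of $\log x$, uniformly enough to survive the double sum, is the technically delicate point. Once these error terms are bounded term by term against $x/(\log x)^{2+\eta}$ under the hypothesized growth conditions, and the diagonal and the $S_1$-driven cancellation are assembled as above, the theorem follows.
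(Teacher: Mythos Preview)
Your plan is essentially the paper's proof: open the square, handle the linear piece via the first moment (Theorem~\ref{avg LT for K}), expand $S_2$ as a double sum over pairs of degree-one primes, separate diagonal from off-diagonal, and use the two-prime counting lemma (Lemma~\ref{count isom reduction pairs} in the paper) together with Deuring's theorem on the off-diagonal to extract the square of the main term. Two points are worth correcting.

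First, your diagonal/off-diagonal split by $\p_1=\p_2$ versus $\p_1\ne\p_2$ is not quite right in the Galois number-field setting: there are $\NK$ degree-one primes above each split rational prime $p$, and when $\p_1\ne\p_2$ both lie over the same $p$ your Chinese Remainder argument for the counting lemma fails (the moduli are not coprime). The paper splits instead by $\N\p=\N\p'$ versus $\N\p\ne\N\p'$, lumping all same-norm pairs with the diagonal; that block is then bounded trivially by $\NK$ times the first moment, which is $O(\sqrt x/\log x)$ under the hypotheses. This is a small fix, but without it Lemma~\ref{count isom reduction pairs} does not apply where you invoke it.

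Second, you overstate the analytic difficulty in the last paragraph. No new Chebotar\"ev or large-sieve input is needed for $S_2$: once the off-diagonal counting lemma and Deuring's theorem give
\[
\frac{\NK^2}{4}\sum_{\substack{p,p'\in\mathcal P_r(x)\\ p\ne p'}}\frac{H(r^2-4p)H(r^2-4p')}{pp'},
\]
this is $\bigl[\frac{\NK}{2}\sum_{p}H(r^2-4p)/p\bigr]^2$ minus a diagonal $\sum_p H(r^2-4p)^2/p^2$ which is $O(1)$ (using $H(r^2-4p)\ll p^{1/2+\epsilon}$). The square is then evaluated directly by Proposition~\ref{2nd avg order prop}, the same first-moment asymptotic already proved; all the Chebotar\"ev work is encapsulated there and need not be redone. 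The growth conditions enter exactly as you say, to absorb the box errors $(x\log x)^{2\NK}/\mathscr V_i$ and $(x\log x)^{4\NK}/\mathscr V$ coming from Lemma~\ref{count isom reduction pairs} summed over pairs.
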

An application of the Tur\'an normal order method (see~\cite[Chapter 3]{CM:2006}) 
supplies us with the following corollary.
\begin{cor}\label{normal order}
Let $\delta,\eta>0$ be fixed with $\delta>2\eta$.
If $\ecbox$ satisfies the conditions of Theorem~\ref{variance thm}, then 
for all $E\in\ecbox$ with at most 
$O\left(\frac{\mathscr{V}(\ecbox)}{(\log x)^{\delta-2\eta}}\right)$ 
exceptions, we have
\begin{equation*}
\left|\pi_{E}^{r,1}(x)-\mathfrak{C}_{K,r,1}\pi_{1/2}(x)\right|<
\frac{\sqrt x}{(\log x)^{1+\eta}}.
\end{equation*}
\end{cor}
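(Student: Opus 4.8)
The plan is to deduce Corollary~\ref{normal order} from Theorem~\ref{variance thm} by the standard second-moment argument (the Tur\'an normal order method; cf.~\cite[Chapter 3]{CM:2006}). The one point to watch is that Theorem~\ref{variance thm} must be invoked with the exponent $\delta$, not $\eta$, in its statement and in its growth conditions; this is precisely why the hypothesis $\delta>2\eta$ is imposed, and the growth conditions referred to in the corollary are to be read with $\delta$ substituted for $\eta$.

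Concretely, I would first apply Theorem~\ref{variance thm} with $\delta$ in place of $\eta$. Since $\ecbox$ is assumed to satisfy the resulting growth conditions, this gives
\[
\sum_{E\in\ecbox}\left|\pi_E^{r,1}(x)-\mathfrak{C}_{K,r,1}\pi_{1/2}(x)\right|^2
\ll\#\ecbox\cdot\frac{x}{(\log x)^{2+\delta}}
\ll\mathscr{V}(\ecbox)\cdot\frac{x}{(\log x)^{2+\delta}},
\]
where the last inequality uses the relation $\#\ecbox=\mathscr{V}(\ecbox)+O(\mathscr{V}(\ecbox)/\mathscr{V}_{\min}(\ecbox))$ recorded above together with $\mathscr{V}_{\min}(\ecbox)\gg 1$.

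Next I would introduce the exceptional set
\[
\mathcal{S}:=\left\{E\in\ecbox:\ \left|\pi_E^{r,1}(x)-\mathfrak{C}_{K,r,1}\pi_{1/2}(x)\right|\ge\frac{\sqrt x}{(\log x)^{1+\eta}}\right\}.
\]
Each $E\in\mathcal{S}$ contributes at least $x/(\log x)^{2+2\eta}$ to the sum displayed above, so
\[
\#\mathcal{S}\cdot\frac{x}{(\log x)^{2+2\eta}}
\le\sum_{E\in\ecbox}\left|\pi_E^{r,1}(x)-\mathfrak{C}_{K,r,1}\pi_{1/2}(x)\right|^2
\ll\mathscr{V}(\ecbox)\cdot\frac{x}{(\log x)^{2+\delta}}.
\]
Dividing through by $x/(\log x)^{2+2\eta}$ yields
\[
\#\mathcal{S}\ll\mathscr{V}(\ecbox)\cdot(\log x)^{2\eta-\delta}=\frac{\mathscr{V}(\ecbox)}{(\log x)^{\delta-2\eta}},
\]
which is the claimed count of exceptional curves; for every $E\in\ecbox\setminus\mathcal{S}$ the inequality asserted in the corollary holds by the very definition of $\mathcal{S}$.

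I do not foresee a genuine obstacle here: all of the analytic difficulty is already absorbed into Theorem~\ref{variance thm}, and what remains is a one-line Chebyshev estimate. The only thing to get right is the bookkeeping of the two parameters — squaring the target threshold $\sqrt x/(\log x)^{1+\eta}$ costs a factor $(\log x)^{2\eta}$, which is recovered, with room to spare, from the saving $(\log x)^{\delta}$ in the variance bound exactly when $\delta>2\eta$.
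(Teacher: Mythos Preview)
Your proposal is correct and is precisely the argument the paper has in mind: the paper does not spell out a proof but simply says that ``an application of the Tur\'an normal order method (see~\cite[Chapter 3]{CM:2006}) supplies us with'' the corollary, which is exactly the Chebyshev/second-moment step you wrote down. Your observation that Theorem~\ref{variance thm} must be invoked with parameter $\delta$ (so that the growth conditions in the corollary should be read with $\delta$ in place of $\eta$) is the right way to resolve the only possible ambiguity in the statement.
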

\begin{rmk}
Care should be taken with the interpretation of Corollary~\ref{normal order}.  
It would be easy to draw the conclusion that the average order constant 
$\mathfrak{C}_{K,r,1}$ is the correct constant $\mathfrak{C}_{E,r,1}$ for 
most elliptic curves $E$ defined over $K$.  
Although it is possible for this to happen for a given choice of $K$ and $r$, 
this is not implied by Corollary~\ref{normal order}.  
In fact, Corollary~\ref{normal order} does not even imply that there is 
one elliptic curve $E$ for which $\mathfrak{C}_{K,r,1}=\mathfrak{C}_{E,r,1}$.
The key fact to remember when interpreting Corollary~\ref{normal order} 
is that the curves appearing in the box $\ecbox$ depend on $x$.
Therefore, as $x$ changes, so might the exceptional set.  In fact, it is 
possible that for a given value of $r$, every elliptic curve defined over $K$ 
``eventually" enters the exceptional set.
\end{rmk}

For $f\ge 3$, we also have the following average order result for $\pi_E^{r,f}(x)$.
\begin{thm}\label{higher degree avg}
Let $r$ be a fixed integer.  If $f\ge 3$, then
\begin{equation*}
\lim_{x\rightarrow\infty}
\frac{1}{\#\ecbox}\sum_{E\in\ecbox}\pi_E^{r,f}(x)<\infty,
\end{equation*}
provided that $\mathscr{V}_{\mathrm{min}}(\ecbox)\gg x^{1/f}$.
\end{thm}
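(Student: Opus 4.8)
The plan is to interchange the two summations and reduce the average to a series over the rational primes whose convergence is dictated by the size of $f$; in fact we will show the uniform bound that the average is $O_{K,r,f}(1)$, which gives the finiteness claimed. Write $\pi_E^{r,f}(x)=\sum_{\deg\p=f,\ \N\p\le x}\mathbbm{1}_{a_\p(E)=r}$, the indicator tacitly requiring good reduction at $\p$. Interchanging the order of summation,
\begin{equation*}
\sum_{E_{\alpha,\beta}\in\ecbox}\pi_{E_{\alpha,\beta}}^{r,f}(x)
=\sum_{\substack{\deg\p=f\\ \N\p\le x}}\#\bigl\{E_{\alpha,\beta}\in\ecbox:a_\p(E_{\alpha,\beta})=r\bigr\}.
\end{equation*}
A prime $\p$ of degree $f$ has $\N\p=p^f$ for the rational prime $p$ beneath it, and at most $\NK$ such $\p$ lie over each $p$; so it suffices to bound the inner cardinality for a single $\p$ and sum over $p\le x^{1/f}$.

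Fix $\p$ with $q:=\N\p=p^f\le x$, so $\ints_K/\p\cong\gf_q$. The reduction of $E_{\alpha,\beta}$ modulo $\p$ depends only on the residues $\bar\alpha,\bar\beta\in\gf_q$, so the number of $E_{\alpha,\beta}\in\ecbox$ that reduce modulo $\p$ to a nonsingular curve of trace $r$ is at most
\begin{equation*}
\sum_{(\bar\alpha,\bar\beta)}
\#\{\alpha\in\mathcal B(\vect a_1,\vect b_1):\alpha\equiv\bar\alpha\pmod{\p}\}\cdot
\#\{\beta\in\mathcal B(\vect a_2,\vect b_2):\beta\equiv\bar\beta\pmod{\p}\},
\end{equation*}
summed over those $(\bar\alpha,\bar\beta)$ for which $Y^2=X^3+\bar\alpha X+\bar\beta$ is nonsingular with Frobenius trace $r$. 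Two inputs finish this. First, since $p\ints_K\subseteq\p$, reduction modulo $\p$ factors through reduction of $\mathcal B$-coordinates modulo $p$, and the hypothesis $\mathscr V_{\min}(\ecbox)\gg x^{1/f}\,(\ge p)$ forces the box to equidistribute, so that each class modulo $\p$ is hit $\ll_{\NK}\mathscr V_i(\ecbox)/\N\p$ times. Second, a classical estimate going back to Deuring \cite{Deu:1941} (via the Hurwitz class number $H(4q-r^2)$) bounds the number of $\gf_q$-isomorphism classes of elliptic curves with trace $r$ by $\ll\sqrt q\log q$, and this number is $0$ unless $q\ge r^2/4$; since each such class is represented by $O(q)$ admissible pairs $(\bar\alpha,\bar\beta)$ (the classes with $j\in\{0,1728\}$ contributing $O(q)$ pairs in all), the outer sum has $\ll q^{3/2}\log q$ terms. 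Using $\mathscr V_1(\ecbox)\mathscr V_2(\ecbox)=\mathscr V(\ecbox)$ we obtain
\begin{equation*}
\#\bigl\{E_{\alpha,\beta}\in\ecbox:a_\p(E_{\alpha,\beta})=r,\ E_{\alpha,\beta}\text{ nonsingular mod }\p\bigr\}
\ll q^{3/2}\log q\cdot\frac{\mathscr V(\ecbox)}{q^{2}}
=\frac{\log q}{\sqrt q}\,\mathscr V(\ecbox),
\end{equation*}
with no contribution when $\N\p<r^2/4$.

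Summing over $\p$ and dividing by $\#\ecbox$, which is $\asymp\mathscr V(\ecbox)$ because $\mathscr V_{\min}(\ecbox)\to\infty$, gives
\begin{equation*}
\frac1{\#\ecbox}\sum_{E\in\ecbox}\pi_E^{r,f}(x)
\ll\sum_{\substack{\deg\p=f\\ r^2/4\le\N\p\le x}}\frac{\log\N\p}{\sqrt{\N\p}}
\ll_{K,f}\sum_{p}\frac{\log p}{p^{f/2}}.
\end{equation*}
For $f\ge3$ the exponent $f/2\ge3/2$ exceeds $1$, so this series converges and the average is $O_{K,r,f}(1)$ uniformly in $x$; in particular the limit in the statement is finite. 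This is exactly where the restriction $f\ge3$ is used: for $f=2$ the series degenerates to $\sum_p(\log p)/p$, which diverges, consistent with the $\log\log x$ order predicted in Conjecture~\ref{LT for K}.

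One technicality remains, namely that $\pi_E^{r,f}$ counts primes of good reduction of the curve $E_{\alpha,\beta}$, whereas the estimate above detects only those $\p$ at which the fixed model $Y^2=X^3+\alpha X+\beta$ is nonsingular; these disagree only when $\p\mid(4\alpha^3+27\beta^2)$, and for $\p\nmid 6$ good reduction then forces $v_\p(4\alpha^3+27\beta^2)\ge12$. The finitely many $\p\mid 6$ add $O(1)$ to $\pi_E^{r,f}$ per curve, hence $O(1)$ on average, while the number of models in $\ecbox$ with $\p^{12}\mid(4\alpha^3+27\beta^2)$ is $\ll\mathscr V(\ecbox)/\N\p^{6}$ for each such $\p$ by the same equidistribution input, contributing $\ll_{K,f}\sum_p p^{-6f}=O_{K,f}(1)$ on average. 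I expect the only real work to lie in making the two estimates used above for a single $\p$ — above all the equidistribution of the box modulo $\p$ — uniform over the entire range $\N\p\le x$; granting that, the convergence of the final series for $f\ge3$, and with it the theorem, is immediate.
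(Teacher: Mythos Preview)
Your approach is sound and matches what the paper has in mind: the paper omits this proof entirely, remarking that the methods of~\cite{FJKP} already suffice once the Abelian hypothesis is dropped. Those methods, and the paper's own argument for $f=1$ (Proposition~\ref{1st avg order prop} via Lemma~\ref{count isom reductions} and Theorem~\ref{Deuring's thm}), follow exactly the scheme you use---interchange, count models in the box reducing to a prescribed curve over $\gf_q$, invoke Deuring, and sum over primes---so your write-up is essentially a correct reconstruction, arguably cleaner for $f\ge 3$ since you only need an upper bound and can replace the character-sum estimate of Lemma~\ref{count isom reductions} by crude equidistribution of the box modulo $p$.

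There is one small slip in your final paragraph. You bound the number of $(\alpha,\beta)\in\ecbox$ with $\p^{12}\mid(4\alpha^3+27\beta^2)$ by $\mathscr V(\ecbox)/\N\p^{6}$ ``by the same equidistribution input,'' but that input only gives equidistribution modulo $\p$ (equivalently, of the $\mathcal B$-coordinates modulo $p$), because the hypothesis is merely $\mathscr V_{\min}(\ecbox)\gg p$, not $\gg p^{12}$. The fix is simpler than what you wrote: drop the $\p^{12}$ refinement and just bound the contribution of all $(\alpha,\beta)$ with $\p\mid(4\alpha^3+27\beta^2)$. For $\p\nmid 6$ there are $O(q)$ such residue pairs in $\gf_q^2$, hence $\ll\mathscr V(\ecbox)/q$ models in the box by the equidistribution you already have, and $\sum_p p^{-f}$ converges for every $f\ge 2$. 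With that adjustment the argument is complete.
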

\begin{rmk}
The authors of~\cite{FJKP} state a version of this result under the assumption that 
$K/\qq$ is Abelian.  Although they only state their result for Abelian extensions, it turns out that 
their methods are sufficient to 
prove Theorem~\ref{higher degree avg} under the relaxed assumption that $K/\qq$ is 
Galois.  Therefore, we will omit the proof of Theorem~\ref{higher degree avg} and 
concentrate on the case $f=1$ in this paper.
\end{rmk}
\begin{rmk}
Unfortunately, using the techniques that we present here, 
the case $f=2$ is not so easily generalized to the setting of an arbitrary finite, 
normal extension of $\qq$.  See the discussion in Section~\ref{compute l-series avg}
preceding the proof of Proposition~\ref{weighted avg of l-series}.
However, the authors of this paper do believe it should be possible 
to make some progress towards such a result and are currently pursuing it for a certain class of
number fields possessing a non-Abelian Galois group over $\qq$.
\end{rmk}

\section{Acknowledgement}

The authors wish to thank Chantal David, Nathan Jones, and the anonymous referee for
helpful suggestions during the preparation of this article. 

\section{The average order constant}\label{avg order const sect}

In this section, we give a precise description of the constant $\mathfrak{C}_{K,r,1}$,
first as an infinite sum and then as an infinite product over primes.  This requires a 
considerable amount of additional notation.

Recall that $G=\Gal(K/\qq)$.   Let $[G,G]$ denote the commutator subgroup of $G$, 
and let $\mathcal A$ be the fixed field of $[G,G]$.  Then $\mathcal A/\qq$ is an 
Abelian extension of finite degree, which we denote by $\NA=[\mathcal A:\qq]$.
By the Kronecker-Weber Theorem~\cite[p.~210]{Lan:1994}, it follows that there is a 
smallest positive integer 
$\MK$ so that $\mathcal A\subseteq\qq(\zeta_{\MK})$.  Here, $\zeta_{\MK}$ 
is a primitive $\MK$-th root of unity.
It is well-known that $\Gal(\qq(\zeta_{\MK})/\qq)\isom(\zz/\MK\zz)^*$ with 
a natural choice of isomorphism.  
See~\cite[p.~11]{Was:1997} for example.
Let $G_{\MK}$\label{def GmK} denote the subgroup of $(\zz/\MK\zz)^*$ corresponding to 
$\Gal(\qq(\zeta_{\MK})/\mathcal A)$ under this isomorphism.

The constant $\mathfrak{C}_{K,r,1}$ is given by
\begin{equation}\label{avg const}
\mathfrak{C}_{K,r,1}:=\frac{2\NA}{\pi}
\left(\sum_{b\in G_{\MK}}\sum_{k=1}^\infty\sum_{n=1}^\infty
\frac{c_k^{r,b,\MK}(n)}{nk\varphi([\MK,nk^2])}\right),
\end{equation}
where
\begin{equation}\label{c fct defn}
c_k^{r,b,\MK}(n):=
\sum_{\substack{
a\in(\zz/4n\zz)\\ a\equiv 0,1\pmod{4}\\
(r^2-ak^2,4nk^2)=4\\
4b\equiv r^2-ak^2\pmod{(4\MK,4nk^2)}
}}\legendre{a}{n}.
\end{equation}
The fact that the double infinite sum in~\eqref{avg const} is absolutely convergent 
follows from the proof of Proposition~\ref{weighted avg of l-series}.  
See page~\pageref{weighted avg of l-series}.
However, since the proof of Proposition~\ref{weighted avg of l-series} is similar to 
the proof of~\cite[Lemma 2.2]{DP:2004}, we give only a sketch of the proof, highlighting
the differences.  See~\cite[pp. 193-199]{DP:2004} for more detail.

We also have a description of the constant $\mathfrak{C}_{K,r,1}$ as a product.
However, this requires the introduction of some more notation.
For $b\in G_{\MK}$, let $\Delta^{r,b}:=r^2-4b$, and define the following sets of 
rational primes
\begin{eqnarray}
\mathfrak{Q}^<_{r,b,\MK}
&:=\{\ell>2: \ell|\MK, \ell\dnd r,\text{ and }\ord_\ell(\Delta^{r,b})<\ord_{\ell}(\MK)\},\\
\mathfrak{Q}^\ge_{r,b,\MK}
&:=\{\ell>2: \ell|\MK, \ell\dnd r,\text{ and }\ord_\ell(\Delta^{r,b})\ge\ord_{\ell}(\MK)\}.
\end{eqnarray}
In addition, let
\begin{equation*}
\Gamma_\ell:=\begin{cases}
\legendre{\Delta^{r,b}/\ell^{\ord_\ell(\Delta^{r,b})}}{\ell} &
\text{if }\ord_\ell(\Delta^{r,b})\text{ is even, positive, and finite},\\
0 & \text{otherwise},
\end{cases}
\end{equation*}
and
\begin{equation*}
\mathcal F_2(r,b,\MK):=\begin{cases}
2/3& \begin{array}{l}\text{if }2\dnd r;\end{array}\\
4/3& \begin{array}{l}\text{if }2|r, 4\dnd \MK;\end{array}\\
2-\frac{2}{3\cdot2^{\floor{\ord_2(\MK)/2}}}&
\begin{array}{l}\text{if }r\equiv 2\pmod 4, 
2\le\ord_2(\MK)\le\ord_2(\Delta^{r,b})-2;\end{array}\\
2-\frac{4}{3\cdot 2^{\frac{\ord_2(\MK)-1}{2}}}&
\begin{array}{l}
\text{if }
r\equiv 2\pmod 4, \ord_2(\MK)=\ord_2(\Delta^{r,b})-1,\\
\quad 2|\ord_2(\Delta^{r,b});
\end{array}\\
2-\frac{2}{2^{\ord_2(\MK)/2}}&
\begin{array}{l}
\text{if }r\equiv 2\pmod 4,\ord_2(\MK)=\ord_2(\Delta^{r,b})-1,\\ 
\quad 2\dnd\ord_2(\Delta^{r,b});
\end{array}\\
2-\frac{2}{3\cdot2^{\ord_2(\MK)/2}}&
\begin{array}{l}
\text{if }r\equiv 2\pmod 4,\ord_2(\MK)=\ord_2(\Delta^{r,b}),\\
\quad 2|\ord_2(\Delta^{r,b}),
\frac{\Delta^{r,b}}{2^{\ord_2(\Delta^{r,b})}}\equiv 1\pmod 4;
\end{array}\\
2-\frac{2}{2^{\floor{\ord_2(\MK)/2}}}&
\begin{array}{l}
\text{if } r\equiv 2\pmod 4,
\ord_2(\MK)=\ord_2(\Delta^{r,b}),\\
\quad\left[
2\dnd\ord_2(\Delta^{r,b})\textbf{ OR }
\frac{\Delta^{r,b}}{2^{\ord_2(\Delta^{r,b})}}\equiv 3\pmod 4
\right];
\end{array}\\
2&
\begin{array}{l}
\text{if } r\equiv 2\pmod 4, \ord_2(\MK)>\ord_2(\Delta^{r,b}),\\
\quad 2|\ord_2(\Delta^{r,b}), 
\frac{\Delta^{r,b}}{2^{\ord_2(\Delta^{r,b})}}\equiv 1\pmod 8;
\end{array}\\
2-\frac{4}{3\cdot 2^{\ord_2(\Delta^{r,b})/2}}&
\begin{array}{l}
\text{if } r\equiv 2\pmod 4, \ord_2(\MK)>\ord_2(\Delta^{r,b}),\\
\quad 2|\ord_2(\Delta^{r,b}), 
\frac{\Delta^{r,b}}{2^{\ord_2(\Delta^{r,b})}}\equiv 5\pmod 8;
\end{array}\\
2-\frac{2}{2^{\ord_2(\Delta^{r,b})/2}}&
\begin{array}{l}
\text{if } r\equiv 2\pmod 4, \ord_2(\MK)>\ord_2(\Delta^{r,b}),\\
\quad\left[
2\dnd\ord_2(\Delta^{r,b})\textbf{ OR }
\frac{\Delta^{r,b}}{2^{\ord_2(\Delta^{r,b})}}\equiv 3\pmod 8
\right];
\end{array}\\
\frac{5}{3}&
\begin{array}{l}
\text{if } r\equiv 0\pmod 4, \ord_2(\MK)=2, b\equiv 3\pmod 4;
\end{array}\\
2&
\begin{array}{l}
\text{if } r\equiv 0\pmod 4, 8|\MK, b\equiv 3\pmod 4,\\
\quad\frac{\Delta^{r,b}}{4}\equiv 1\pmod 8;
\end{array}\\
\frac{4}{3}&
\begin{array}{l}
\text{if } r\equiv 0\pmod 4, 8|\MK, b\equiv 3\pmod 4,\\
\quad\frac{\Delta^{r,b}}{4}\equiv 5\pmod 8;
\end{array}\\
1&
\begin{array}{l}
\text{if } r\equiv 0\pmod 4, 4|\MK, b\equiv 1\pmod 4.
\end{array}
\end{cases}
\end{equation*}

Finally, let $\mathcal F(r,b,\MK)$ denote the following finite product over the primes 
dividing $\MK$:
\begin{equation}
\begin{split}
\mathcal F_2(r,b,\MK)
\prod_{\substack{\ell\neq 2\\ \ell|\MK\\ \ell|r}}
\frac{\ell\left(\ell+\legendre{-b}{\ell}\right)}{\ell^2-1}
\prod_{\ell\in\mathfrak{Q}_{r,b,\MK}^\ge}
\left(
\frac{\ell^{\floor{\frac{\ord_\ell(\MK)+1}{2}}}-1}
{\ell^{\floor{\frac{\ord_\ell(\MK)-1}{2}}}(\ell-1)}
+
\frac{\ell^{\ord_\ell(\MK)+2}}{\ell^{3\floor{\frac{\ord_\ell(\MK)+1}{2}}}(\ell^2-1)}
\right)\\
\quad\cdot
\prod_{\ell\in\mathfrak{Q}_{r,b,\MK}^<}
\left(
1+
\frac{\ell\legendre{\Delta^{r,b}}{\ell}+\legendre{\Delta^{r,b}}{\ell}^2
+\displaystyle\frac{\ell\Gamma_\ell+\ell^2\Gamma_\ell^2}{\ell^{\ord_\ell(\Delta^{r,b})/2}}}
{\ell^2-1}
+
\frac{\Gamma_\ell^2\left(\ell^{\floor{\frac{\ord_\ell(\Delta^{r,b})-1}{2}}}-1\right)}
{\ell^{\floor{\frac{\ord_\ell(\Delta^{r,b})-1}{2}}}(\ell-1)}
\right).
\end{split}
\end{equation}

\begin{thm}
As an infinite product over primes, we may write
\begin{equation*}
\mathfrak{C}_{K,r,1}=\left(
\frac{2\NA}{\pi\varphi(\MK)}
\prod_{\substack{\ell\ne 2\\ \ell\dnd\MK\\ \ell\dnd r}}
      \frac{\ell(\ell^2-\ell-1)}{(\ell+1)(\ell-1)^2}
\prod_{\substack{\ell\ne 2\\ \ell\dnd\MK\\ \ell| r}}
      \frac{\ell^2}{\ell^2-1}
\right)
\sum_{b\in G_{\MK}}\mathcal{F}(r,b,\MK).
\end{equation*}
\end{thm}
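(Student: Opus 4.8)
The plan is to fix $b\in G_{\MK}$, recognize the inner double sum
\[
S_b:=\sum_{k=1}^{\infty}\sum_{n=1}^{\infty}\frac{c_k^{r,b,\MK}(n)}{nk\,\varphi([\MK,nk^2])}
\]
of~\eqref{avg const} as an Euler product over rational primes, evaluate the local factors, and reassemble. The absolute convergence needed to rearrange $S_b$ into a product is exactly what the proof of Proposition~\ref{weighted avg of l-series} supplies. Writing $n=\prod_\ell\ell^{u_\ell}$ and $k=\prod_\ell\ell^{v_\ell}$, one observes that the three conditions defining $c_k^{r,b,\MK}(n)$ decouple prime by prime: the restriction $a\equiv 0,1\pmod 4$ involves only the $2$-part of $a$, while $(4\MK,4nk^2)=4\prod_\ell\ell^{\min\{\ord_\ell\MK,\,u_\ell+2v_\ell\}}$ and $[\MK,nk^2]=\prod_\ell\ell^{\max\{\ord_\ell\MK,\,u_\ell+2v_\ell\}}$, so both the congruence on $r^2-ak^2$ and the Euler factor in the denominator split over $\ell$. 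Combining this with multiplicativity of the Kronecker symbol $\legendre{\,\cdot\,}{n}$ in $n$ and of $\varphi$, the Chinese Remainder Theorem yields $S_b=\prod_\ell S_{b,\ell}$, where $S_{b,\ell}$ is a purely local sum over the exponents $u,v\ge 0$, weighted by a local count of admissible residues times the local symbol. This reduces the theorem to evaluating each $S_{b,\ell}$ and multiplying back.

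First I would dispatch the generic primes. For $\ell\neq 2$ with $\ell\nmid\MK$ the congruence coming from $b$ is vacuous (the relevant modulus is $\ell^0$), so $S_{b,\ell}$ is the classical Lang--Trotter local density; summing the resulting geometric-type series over $u$ and $v$ gives $S_{b,\ell}=\dfrac{\ell(\ell^2-\ell-1)}{(\ell+1)(\ell-1)^2}$ if $\ell\nmid r$ and $S_{b,\ell}=\dfrac{\ell^2}{\ell^2-1}$ if $\ell\mid r$, both independent of $b$ (cf.~\cite[Lemma 2.2]{DP:2004}). For $\ell\neq 2$ with $\ell\mid\MK$ the $b$-condition is active, and it is convenient to extract $\varphi(\ell^{\ord_\ell\MK})$ from the denominator and evaluate $\varphi(\ell^{\ord_\ell\MK})\,S_{b,\ell}$. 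When $\ell\mid r$ one has $\Delta^{r,b}\equiv-4b\pmod\ell$, and a short computation gives $\varphi(\ell^{\ord_\ell\MK})\,S_{b,\ell}=\dfrac{\ell\left(\ell+\legendre{-b}{\ell}\right)}{\ell^2-1}$. When $\ell\nmid r$ the value depends on how $\ord_\ell(\Delta^{r,b})$ compares with $\ord_\ell(\MK)$, that is, on whether $\ell\in\mathfrak{Q}^\ge_{r,b,\MK}$ or $\ell\in\mathfrak{Q}^<_{r,b,\MK}$, with $\Gamma_\ell$ recording the pertinent quadratic-residue symbol; the two cases reproduce exactly the two more intricate factors appearing in $\mathcal F(r,b,\MK)$.

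The prime $\ell=2$ is where I expect the real difficulty. Because every modulus in the definition of $c_k^{r,b,\MK}(n)$ carries an extra factor $4$ and $a$ is confined to $a\equiv 0,1\pmod 4$, the local sum $S_{b,2}$ (equivalently $\varphi(2^{\ord_2\MK})\,S_{b,2}$, which will turn out to be $\mathcal F_2(r,b,\MK)$) must be broken up according to the values of $\ord_2(\MK)$, $\ord_2(r)$, $\ord_2(\Delta^{r,b})$, and the class of $\Delta^{r,b}/2^{\ord_2(\Delta^{r,b})}$ modulo $8$. Summing the (again essentially geometric) series in $u$ and $v$ in each of these regimes is what produces the thirteen-case formula for $\mathcal F_2(r,b,\MK)$; this is the step that requires following the $2$-adic bookkeeping of~\cite[pp.~193--199]{DP:2004} most closely, and it is the principal obstacle.

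Finally I would reassemble. Pulling $\varphi(\ell^{\ord_\ell\MK})$ out of each ramified local factor and invoking multiplicativity of $\varphi$ produces a single factor $1/\varphi(\MK)$; what remains of the ramified part, together with $\varphi(2^{\ord_2\MK})S_{b,2}=\mathcal F_2(r,b,\MK)$, is by the previous two paragraphs exactly $\mathcal F(r,b,\MK)$. Since the unramified factors with $\ell\nmid\MK$ do not depend on $b$, they may be taken out of the sum over $b\in G_{\MK}$, where they assemble into the two displayed Euler products. Multiplying by the leading constant $2\NA/\pi$ from~\eqref{avg const} then gives
\[
\mathfrak{C}_{K,r,1}=\left(
\frac{2\NA}{\pi\varphi(\MK)}
\prod_{\substack{\ell\ne 2\\ \ell\dnd\MK\\ \ell\dnd r}}
\frac{\ell(\ell^2-\ell-1)}{(\ell+1)(\ell-1)^2}
\prod_{\substack{\ell\ne 2\\ \ell\dnd\MK\\ \ell| r}}
\frac{\ell^2}{\ell^2-1}
\right)
\sum_{b\in G_{\MK}}\mathcal{F}(r,b,\MK),
\]
which is the asserted product formula.
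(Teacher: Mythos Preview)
Your plan is sound and is in fact more detailed than the paper's own proof, which consists entirely of the sentence ``See Theorem~1.1 and Proposition~2.1 of~\cite{Jam:2005} and compare with~\eqref{avg const}.'' In other words, the authors do not carry out the Euler-product evaluation in this paper at all; they defer to prior work where exactly the local computations you describe---splitting $S_b$ into a product over primes via the Chinese Remainder Theorem and multiplicativity, evaluating the generic factors as the familiar Lang--Trotter local densities, and treating the primes dividing $\MK$ (especially $\ell=2$) by a case analysis on $\ord_\ell(\Delta^{r,b})$ versus $\ord_\ell(\MK)$---are performed. Your sketch is thus essentially a blueprint for the contents of~\cite{Jam:2005}, and the identification $\varphi(\ell^{\ord_\ell\MK})S_{b,\ell}$ with the local pieces of $\mathcal F(r,b,\MK)$, together with pulling the $b$-independent unramified factors outside the sum over $G_{\MK}$, is precisely how one matches the sum~\eqref{avg const} to the displayed product. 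There is no gap in your approach; you have simply written out what the paper elected to cite.
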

\begin{rmk}
In the case that $K/\qq$ is an Abelian extension, $K=\mathcal A$ and the constant 
$\mathfrak{C}_{K,r,1}$ agrees with the average order constant of~\cite{FJKP} 
(stated only for odd $r$).
\end{rmk}
\begin{proof}
See Theorem 1.1 and Proposition 2.1 of~\cite{Jam:2005} and compare with~\eqref{avg const} 
on page~\pageref{avg const}.
\end{proof}

\section{Intermediate results}\label{intermediate results}

\subsection{Counting curves}

For a fixed prime ideal $\p$ and a fixed elliptic curve $E$ defined over the finite field 
$\ints_K/\p$, we will need to count the number of models in the box $\ecbox$ which are isomorphic 
to $E$ modulo $\p$.  Now, if we assume that $\p\dnd 6$, then any elliptic curve defined over 
$\ints_K/\p$ may be realized using a model of the form 
\begin{equation}
E_{a,b}: Y^2=X^3+aX+b\text{ with }a,b\in\ints_K/\p.
\end{equation}
For an elliptic curve $E$ over $K$ and a prime $\p$ of good reduction, 
let $E^\p$ denote the reduction of $E$ modulo $\p$.
In order to carry out the proof of Theorem~\ref{avg LT for K}, we will need an estimate on the 
size of
\begin{equation}
\mathscr{B}(E_{a,b},\p):=\{E\in\mathscr{B}: E^\p\isom E_{a,b}\}.
\end{equation}

\begin{rmk}
It is important to note that here we are counting equations (or models) in 
$\mathscr{B}$ whose reduction modulo $\p$ are in the same isomorphism 
class as $E_{a,b}$.
\end{rmk}

\begin{lma}\label{count isom reductions}
Recall that $\mathcal B=\{\gamma_j\}_{j=1}^{\NK}$ is our fixed integral basis for $\ints_K$.  
Let $\p$ be a degree $1$ prime of $K$ such that $\p\dnd 6\prod_{j=1}^{\NK}\gamma_j$.
Let $p$ denote the 
unique rational prime lying below $\p$, and let $E_{a,b}$ be the fixed 
elliptic curve defined over $\ints_K/\p\isom\gf_p$ via the equation 
$E_{a,b}: Y^2=X^3+aX+b$.  Then
\begin{equation*}
\begin{split}
\#\ecbox(E_{a,b},\p)=\frac{p-1}{p^2\#\mathrm{Aut}_p(E_{a,b})}\mathscr{V}(\ecbox)
&+O\left(
\frac{\mathscr{V}(\ecbox)}{p\mathscr{V}_{\mathrm{min}}(\ecbox)}
+\left(\mathscr{V}_1(\ecbox)+\mathscr{V}_2(\ecbox)\right)p^{\NK-3/2}(\log p)^{\NK}
\right)\\
&+O\left(p^{2\NK-3/2}(\log p)^{2\NK}\right),
\end{split}
\end{equation*}
where
\begin{equation*}
\#\mathrm{Aut}_p(E_{a,b})=\begin{cases}
2&\text{if } ab\ne 0,\\
(4,p-1)&\text{if } a\ne 0\text{ and }b=0,\\
(6,p-1)&\text{if } a=0\text{ and }b\ne 0.
\end{cases}
\end{equation*}
\end{lma}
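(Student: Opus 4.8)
The plan is to reduce the problem to a lattice-point count. First I would fix the prime $\p$ and the target curve $E_{a,b}$ over $\ints_K/\p\isom\gf_p$. Since $\p\dnd 6$, every curve with good reduction at $\p$ reduces to a short Weierstrass model, and two models $E_{\alpha,\beta}$ and $E_{a,b}$ are isomorphic over $\gf_p$ exactly when there is $u\in\gf_p^*$ with $\alpha\equiv u^4 a$ and $\beta\equiv u^6 b\pmod\p$. So the reductions $(\bar\alpha,\bar\beta)$ that land in the isomorphism class of $E_{a,b}$ form an orbit of size $(p-1)/\#\Aut_p(E_{a,b})$ inside $\gf_p\times\gf_p$; the three cases for $\#\Aut_p(E_{a,b})$ are the standard ones depending on whether $j=0,1728$, i.e. whether $a=0$ or $b=0$. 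This gives the ``expected density'' $\frac{p-1}{p^2\#\Aut_p(E_{a,b})}$.

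Next I would translate ``$E_{\alpha,\beta}\in\ecbox(E_{a,b},\p)$'' into a congruence condition on the coordinate vectors. Because $\p$ has degree $1$ and $\p\dnd\prod_j\gamma_j$, reduction mod $\p$ composed with the coordinate map $[\cdot]_\ecbox$ identifies $\ints_K/\p$ with $\zz/p\zz$ in such a way that $[\alpha]_\ecbox\bmod p$ determines $\bar\alpha$ via a fixed $\gf_p$-linear (indeed, componentwise-after-scaling) formula; concretely $\bar\alpha=\sum_j \bar\gamma_j (\text{$j$-th coord of }\alpha)$, and similarly for $\beta$. Thus $\ecbox(E_{a,b},\p)$ is the set of lattice points $([\alpha]_\ecbox,[\beta]_\ecbox)$ in the box $\mathcal B(\vect a_1,\vect b_1)\times\mathcal B(\vect a_2,\vect b_2)\subset\zz^{2\NK}$ whose reduction mod $p$ lies in a prescribed union of cosets of a subgroup of $(\zz/p\zz)^{2\NK}$ — namely the preimage, under the pair of linear maps $\gf_p^{\NK}\to\gf_p$, of the orbit described above. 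The number of residue classes mod $p$ that are ``good'' is exactly $p^{2\NK-2}\cdot\frac{p-1}{\#\Aut_p(E_{a,b})}$.

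Then I would invoke a standard lattice-point estimate: the number of integer points of a box $\prod_{i}[c_i-B_i,c_i+B_i]$ lying in a fixed residue class mod $p$ is $\frac{1}{p^N}\prod_i(2B_i) + O\big(\sum_{S\subsetneq\{1,\dots,N\}} p^{-|S|}\prod_{i\in S}(2B_i)\big)$, and more usefully one sums this over the good classes. Summing the main term over the $p^{2\NK-2}(p-1)/\#\Aut_p$ good classes yields $\frac{p-1}{p^2\#\Aut_p}\mathscr V(\ecbox)$. For the error, the dominant contributions come from forgetting one coordinate (giving $(\mathscr V_1+\mathscr V_2)\cdot p^{\NK-2}\cdot\#(\text{good classes in the remaining $2\NK-1$ coords})$, which after bookkeeping is $(\mathscr V_1(\ecbox)+\mathscr V_2(\ecbox))p^{\NK-3/2}(\log p)^{\NK}$ — the extra $p^{1/2}(\log p)^{\NK}$ slack is what one actually gets from a cleaner lattice-counting lemma rather than the crude bound), and from forgetting all coordinates, giving $O(p^{2\NK-3/2}(\log p)^{2\NK})$; the $\mathscr V(\ecbox)/(p\,\mathscr V_{\min}(\ecbox))$ term absorbs the $O$-term already present in $\#\ecbox=\mathscr V(\ecbox)+O(\mathscr V(\ecbox)/\mathscr V_{\min}(\ecbox))$ after dividing by $p$.

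The main obstacle is getting the error term exactly in the stated shape rather than a weaker one. A naive inclusion–exclusion over subsets of coordinates produces terms like $(\mathscr V_1+\mathscr V_2)p^{\NK-2}$ and $p^{2\NK-2}$, which are genuinely larger than what is claimed, so the proof cannot just be ``box mod $p$ equidistributes.'' Instead I expect one must use a sharper counting lemma — the kind that estimates the number of lattice points in a box lying in prescribed arithmetic progressions with a main term plus an error that already carries the $p^{1/2}(\log p)^{\NK}$ saving (this is presumably the analogue of a lemma from \cite{DP:2004} or \cite{FJKP}, obtained via Fourier/Poisson summation on the box and bounding exponential sums). Handling the $j=0$ and $j=1728$ strata separately (since there the orbit has fewer good classes but still within the same error budget) and making sure the divisibility hypothesis $\p\dnd 6\prod_j\gamma_j$ is exactly what is needed to keep the linear reduction maps surjective with the right kernel sizes, is the remaining technical care.
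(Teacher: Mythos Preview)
Your overall strategy---detect the congruences with additive characters and isolate the main term from the trivial pair---is exactly what the paper does. The gap is in where you locate the $\sqrt p$ saving. There is no black-box ``lattice points in a box lying in given residue classes'' lemma that beats the naive boundary error by a factor of $\sqrt p$; for a \emph{generic} collection of target classes in $(\zz/p\zz)^2$ no such improvement exists, so the hoped-for lemma from~\cite{DP:2004} or~\cite{FJKP} is not what is doing the work. The saving comes from the specific algebraic shape of the isomorphism-class orbit. The paper keeps the parametrization $u\mapsto(u^4a,u^6b)$ explicit and, after inserting characters, is left with
\[
\frac{1}{p^2\#\Aut_p(E_{a,b})}\sum_{(\psi,\psi')\ne(\psi_0,\psi_0)}
\Bigl|\sum_{u\in\gf_p^*}\overline\psi(au^4)\overline{\psi'}(bu^6)\Bigr|\,
\Bigl|\sum_{\alpha\in\mathcal B(\vect a_1,\vect b_1)}\psi(\alpha)\Bigr|\,
\Bigl|\sum_{\beta\in\mathcal B(\vect a_2,\vect b_2)}\psi'(\beta)\Bigr|.
\]
Writing $\psi'=\psi^m$ (with the nontrivial one of $\psi,\psi'$ playing the role of $\psi$) turns the $u$-sum into a complete exponential sum $\sum_u\overline\psi(au^4+mbu^6)$ attached to a nonconstant polynomial of degree at most $6$, and since $\p\dnd 6$ Weil's theorem gives $O(\sqrt p)$. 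That is the input you did not name, and it is not a lattice-point fact at all.

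The $(\log p)^{\NK}$ factors then arise from the box sums: expand $\alpha=\sum_j c_j\gamma_j$, use $\p\dnd\prod_j\gamma_j$ so that each $\gamma_j$ is a unit mod $\p$, factor $\sum_\alpha\psi(\alpha)$ into a product of $\NK$ one-variable incomplete sums, and apply the standard estimate $\sum_{\psi\ne\psi_0}\bigl|\sum_{c}\psi(c)\bigr|\ll p\log p$ to each factor, yielding $\sum_{\psi\ne\psi_0}\bigl|\sum_\alpha\psi(\alpha)\bigr|\ll p^{\NK}(\log p)^{\NK}$. Combining the Weil bound $\sqrt p$ with these box estimates (using the trivial bound $\mathscr V_i(\ecbox)$ when the corresponding character is trivial) gives precisely the three error terms $\mathscr V(\ecbox)/(p\,\mathscr V_{\min}(\ecbox))$, $(\mathscr V_1+\mathscr V_2)p^{\NK-3/2}(\log p)^{\NK}$, and $p^{2\NK-3/2}(\log p)^{2\NK}$.
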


We delay the proof of Lemma~\ref{count isom reductions} until 
\textsection\ref{reduction count proof sect}.  
In addition, if $\p$ and $\p'$ do not lie above the same rational prime, we will also
need an estimate on the size of
\begin{equation}
\ecbox(E_{a,b},\p; E_{a',b'},\p'):=\{E\in\ecbox: E^\p\isom E_{a,b}\text{ and } E^{\p'}\isom E_{a',b'}\}
\end{equation}
in order to carry out the proof of Theorem~\ref{variance thm}.

\begin{lma}\label{count isom reduction pairs}
With the same notation and assumptions as in Lemma~\ref{count isom reductions}, assume 
further that $\p'$ is a prime of $K$ satisfying the same conditions as $\p$ except that 
$\p'$ lies over the rational prime $p'$ and $p'\ne p$.  That is, $\p$ and $\p'$ do not lie over 
the same prime.  Also, let $E_{a',b'}$ be the fixed elliptic curve defined over 
$\ints_K/\p'\isom\gf_{p'}$ via the equation $E_{a',b'}: Y^2=X^3+a'X+b'$.  Then
\begin{equation*}
\begin{split}
\#\ecbox(E_{a,b},\p;E_{a',b'},\p')&=
\frac{(p-1)(p'-1)}{(pp')^2\#\mathrm{Aut}_p(E_{a,b})\#\mathrm{Aut}_{p'}(E_{a',b'})}\mathscr{V}(\ecbox)
+O\left(
\frac{\mathscr{V}(\ecbox)}{pp'\mathscr{V}_{\mathrm{min}}(\ecbox)}
\right)\\
&+O\left(
(pp')^{2\NK-3/2}(\log pp')^{2\NK}
+\left(\mathscr{V}_1(\ecbox)+\mathscr{V}_2(\ecbox)
\right)(pp')^{\NK-3/2})(\log pp')^{\NK}
\right).
\end{split}
\end{equation*}
\end{lma}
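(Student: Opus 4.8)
The plan is to follow the proof of Lemma~\ref{count isom reductions} with the single prime $\p$ replaced by the product ideal $\p\p'$. The only new structural feature is that, because $p\ne p'$, the primes $\p$ and $\p'$ are comaximal and the Chinese Remainder Theorem gives a ring isomorphism $\ints_K/\p\p'\isom(\ints_K/\p)\times(\ints_K/\p')\isom\gf_p\times\gf_{p'}$, a ring of order $pp'$; this lets every character sum that occurs be factored into a part depending only on $p$ and a part depending only on $p'$, so that the estimates in the proof of Lemma~\ref{count isom reductions} apply at each place separately and the bounds simply multiply. I would begin by writing
\[
\#\ecbox(E_{a,b},\p;E_{a',b'},\p')=\sum_{(\mathfrak c,\mathfrak d)}N_1(\mathfrak c)\,N_2(\mathfrak d),
\]
where $(\mathfrak c,\mathfrak d)$ runs over those pairs of residue classes modulo $\p\p'$ for which $E_{\mathfrak c,\mathfrak d}$ reduces modulo $\p$ to a curve isomorphic to $E_{a,b}$ over $\gf_p$ \emph{and} reduces modulo $\p'$ to a curve isomorphic to $E_{a',b'}$ over $\gf_{p'}$, and where $N_1(\mathfrak c)=\#\{\alpha\in\mathcal{B}(\vect a_1,\vect b_1):\alpha\equiv\mathfrak c\pmod{\p\p'}\}$ and $N_2(\mathfrak d)=\#\{\beta\in\mathcal{B}(\vect a_2,\vect b_2):\beta\equiv\mathfrak d\pmod{\p\p'}\}$. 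Since $E_{c,d}$ is isomorphic to $E_{a,b}$ over $\gf_p$ exactly when $(c,d)=(u^4a,u^6b)$ for some $u\in\gf_p^*$, with $u$ determined up to $\mathrm{Aut}_p(E_{a,b})$, there are $(p-1)/\#\mathrm{Aut}_p(E_{a,b})$ admissible models over $\gf_p$ and likewise $(p'-1)/\#\mathrm{Aut}_{p'}(E_{a',b'})$ over $\gf_{p'}$; by the Chinese Remainder Theorem the admissible pairs $(\mathfrak c,\mathfrak d)$ are precisely the independent combinations of these, so there are $(p-1)(p'-1)/\big(\#\mathrm{Aut}_p(E_{a,b})\#\mathrm{Aut}_{p'}(E_{a',b'})\big)$ of them.

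Next I would estimate $N_1(\mathfrak c)$ and $N_2(\mathfrak d)$ as lattice-point counts in a box. Under the coordinate map $[\cdot]_{\mathcal B}$ the ideal $\p\p'$ becomes a sublattice $L\subseteq\zz^{\NK}$ of index $pp'$, and $\alpha\equiv\mathfrak c\pmod{\p\p'}$ is a coset condition for $L$ inside $\mathcal{B}(\vect a_1,\vect b_1)$. Expanding the indicator of this coset into additive characters of $\zz^{\NK}/L$, which by the Chinese Remainder Theorem factor as $\chi_p\otimes\chi_{p'}$ with $\chi_p$ a character modulo $p$ and $\chi_{p'}$ one modulo $p'$ (here the hypothesis $\p,\p'\dnd 6\prod_{j=1}^{\NK}\gamma_j$ is used, so that the linear forms defining $L$ have coefficients that are units modulo $p$ and modulo $p'$), one obtains the same estimate as in the proof of Lemma~\ref{count isom reductions} with the modulus $p$ replaced by $pp'$: the trivial character gives the main term $\mathscr{V}_1(\ecbox)/(pp')$, the boundary of the box contributes $O\big(\mathscr{V}_1(\ecbox)/(pp'\,\mathscr{V}_{\mathrm{min}}(\ecbox))\big)$, and the nontrivial characters contribute $O\big((pp')^{\NK-1}(\log pp')^{\NK}\big)$; the analogous statement holds for $N_2(\mathfrak d)$ with $\mathscr{V}_1(\ecbox)$ replaced by $\mathscr{V}_2(\ecbox)$.

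Finally I would substitute these expansions into $\sum_{(\mathfrak c,\mathfrak d)}N_1(\mathfrak c)N_2(\mathfrak d)$, multiply out, and sum over the $\ll pp'$ admissible pairs. The product of the two main terms, times the number of admissible pairs, gives
\[
\frac{(p-1)(p'-1)}{(pp')^2\,\#\mathrm{Aut}_p(E_{a,b})\,\#\mathrm{Aut}_{p'}(E_{a',b'})}\,\mathscr{V}_1(\ecbox)\mathscr{V}_2(\ecbox),
\]
which is the claimed main term because $\mathscr{V}(\ecbox)=\mathscr{V}_1(\ecbox)\mathscr{V}_2(\ecbox)$. For the cross terms and the product of the two error terms one argues as in Lemma~\ref{count isom reductions}: instead of bounding each $N_i$ crudely and summing trivially over the admissible residues, one retains the character sums and sums over the structured families $\{(u^4a,u^6b):u\in\gf_p^*\}$ and $\{((u')^4a',(u')^6b'):u'\in\gf_{p'}^*\}$ all at once, so that the resulting complete sums in $u$ and in $u'$ each exhibit square-root cancellation of Gauss--Weil type; by the Chinese Remainder Theorem these two savings are independent and multiply, which is what upgrades the exponent from $(pp')^{\NK-1}$ to $(pp')^{\NK-3/2}$ in the last two error terms. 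Collecting everything produces the bounds $O\big(\mathscr{V}(\ecbox)/(pp'\,\mathscr{V}_{\mathrm{min}}(\ecbox))\big)$, $O\big((\mathscr{V}_1(\ecbox)+\mathscr{V}_2(\ecbox))(pp')^{\NK-3/2}(\log pp')^{\NK}\big)$, and $O\big((pp')^{2\NK-3/2}(\log pp')^{2\NK}\big)$ asserted in the statement. I expect the main obstacle to be exactly what it already is in the single-prime case (and in~\cite{DP:2004}): extracting the square-root cancellation over the multiplicative parameters while simultaneously controlling the incomplete exponential sums over the box by $(pp')^{\NK-1}(\log pp')^{\NK}$; but since $p$ and $p'$ are coprime the Chinese Remainder Theorem decouples the two primes, and the argument reduces to two independent copies of the estimate already carried out for Lemma~\ref{count isom reductions}.
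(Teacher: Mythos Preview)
Your proposal is correct and follows exactly the route the paper intends: the paper omits the proof of Lemma~\ref{count isom reduction pairs} entirely, saying only that it is ``similar to the proof of Lemma~\ref{count isom reductions} except that it is more tedious,'' and your outline is precisely that---replace the modulus $p$ by $pp'$, use the Chinese Remainder Theorem (valid since $p\ne p'$) to factor the additive characters on $\ints_K/\p\p'$ and the sum over $(u,u')\in\gf_p^*\times\gf_{p'}^*$, apply Weil at each prime separately, and bound the incomplete box sums by $(pp')^{\NK}(\log pp')^{\NK}$ via the same coordinate-by-coordinate argument as in the paper's proof of Lemma~\ref{count isom reductions}. One small point worth making explicit when you write it up: the Weil saving is $\sqrt p\cdot\sqrt{p'}$ only when the character pair is nontrivial at \emph{both} primes; when it is trivial at $p$ (say) the $u$-sum equals $p-1$ rather than $O(\sqrt p)$, but then the characters $\Psi,\Psi'$ reduce to characters modulo $p'$ alone, and the resulting contribution is exactly the Lemma~\ref{count isom reductions} error at $p'$ divided by an extra factor of $p$, which is absorbed by the stated bounds.
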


Because the proof of Lemma~\ref{count isom reduction pairs} is similar to the proof of 
Lemma~\ref{count isom reductions} except that it is more tedious, we omit it.

\subsection{A weighted average of special values of Dirichlet $L$-functions}\label{L-fct sect}
Recall that for a Dirichlet character $\chi$ and for $\Re(s)$ sufficiently large, 
the Dirichlet $L$-function associated to $\chi$ is defined by
\begin{equation*}
L(s,\chi):=\sum_{n\ge 1}\frac{\chi(n)}{n^s}.
\end{equation*}
Also, recall that when $\chi$ is not trivial, this series converges at $s=1$.
For an integer $d$, we let $\chi_d$ denote the Kronecker symbol $\legendre{d}{\cdot}$.

Let $B(r):=\max\{5,r^2/4\}$, and
let $\mathscr P_r$ denote the set of prime ideals $\p$ satisfying:\label{prime sets}
\begin{itemize}\label{prime set}
\item $B(r)<\N\p$,
\item $\p$ lies over some rational prime $p$ which splits completely in $K$,
\item $\p\dnd\prod_{j=1}^{\NK}\gamma_j$,
\item $\p$ does not ramify in $K(\zeta_{\MK})$.
\end{itemize}
We also define the corresponding set of ``downstairs" primes.
That is, we let $\mathcal P_r$ denote the set of rational primes $p$ lying below some prime 
$\p\in\mathscr P_r$.  
In addition, we will require the ``truncated" sets
$\mathscr{P}_r(x):=\left\{\p\in \mathscr{P}_r: \N\p\in (1,x]\right\}$ and
$\mathcal{P}_r(x):=\mathcal{P}_r\cap (1,x]$.

For a prime $p$ and a positive integer $k$, let $d_k(p):=(r^2-4p)/k^2$ 
if $k^2$ divides $r^2-4p$.  Finally, define the set
\begin{equation}
\mathcal S_k(x;r):=\{p\in\mathcal P_r(x): k^2|(4p-r^2), d_k(p)\equiv 0,1\pmod{4}\}.
\end{equation}

\begin{prop}\label{weighted avg of l-series}
Let
\begin{equation*}
A_1(x;r):=\NK\sum_{k\le 2\sqrt x}\frac{1}{k}
\sum_{p\in\mathcal{S}_k(x;r)}L(1,\chi_{d_k(p)})\log p.
\end{equation*}
Then the double infinite sum defining $\mathfrak{C}_{K,r,1}$ in~\eqref{avg const} 
is absolutely convergent; and for any $\eta>0$,
\begin{equation*}
A_1(x;r)=\frac{\pi}{2}\mathfrak{C}_{K,r,1}x+O\left(\frac{x}{(\log x)^\eta}\right).
\end{equation*}
\end{prop}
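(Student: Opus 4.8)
The plan is to expand each special value as a Dirichlet series, $L(1,\chi_{d_k(p)})=\sum_n\chi_{d_k(p)}(n)/n$, interchange the order of summation, evaluate the resulting sums of $\log p$ over primes $p$ that lie in a prescribed residue class and \emph{split completely in $K$}, and finally recognize the outcome as a truncation of the sum in~\eqref{avg const}. The overall structure is that of~\cite[Lemma~2.2]{DP:2004}, so I will concentrate on the one point at which the hypothesis that $K/\qq$ is merely Galois, rather than Abelian as in~\cite{FJKP}, makes a difference. Throughout, $p>B(r)$ forces $d_k(p)=(r^2-4p)/k^2<0$, so every $\chi_{d_k(p)}$ is the Kronecker character of an imaginary quadratic order and $L(1,\chi_{d_k(p)})\ll\log(|d_k(p)|+2)\ll\log x$; this crude bound handles the various tails.

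First I would truncate the outer sum at $k\le K_0$ with $K_0$ a fixed power of $\log x$: since $\#\{p\le x:k^2\mid 4p-r^2\}\ll x/k^2+1$, the contribution of $k>K_0$ is $\ll(\log x)^2\sum_{k>K_0}(x/k^3+1/k)\ll x(\log x)^2/K_0^2+(\log x)^3=O(x/(\log x)^\eta)$ once $K_0$ is large enough. Then truncate the Dirichlet series, $L(1,\chi_{d_k(p)})=\sum_{n\le N}\chi_{d_k(p)}(n)/n+R_k(p)$, with $N$ a larger fixed power of $\log x$. A crude P\'olya--Vinogradov bound for $R_k(p)$ is not sufficient; instead, via Cauchy--Schwarz, one uses a mean-value estimate of large-sieve type for the partial sums of $L(1,\chi_d)$ averaged over $d$, of the shape $\sum_{0<|d|\le D}|R(d)|^2\ll D(\log D)^{O(1)}/N$ applied with $D\ll x/k^2$ (the $d_k(p)$ being distinct as $p$ varies, the sum over $p$ is majorized by the sum over all $d$ in range). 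This yields $\sum_{p\in\mathcal S_k(x;r)}|R_k(p)|\log p\ll x(\log x)^{O(1)}/(kN^{1/2})$ and hence a total remainder $O(x/(\log x)^\eta)$; this is exactly the step carried out in~\cite[pp.~193--199]{DP:2004}.

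The main term is now $\NK\sum_{k\le K_0}\tfrac1k\sum_{n\le N}\tfrac1n\sum_{p\in\mathcal S_k(x;r)}\legendre{d_k(p)}{n}\log p$, and the modulus governing the inner sum, $q:=[nk^2,\MK]$, is $\ll(\log x)^{O(1)}$. Every condition defining $\mathcal S_k(x;r)$ other than ``$p$ splits completely in $K$'' depends only on the residue of $p$ modulo $q$; the splitting condition is handled Galois-theoretically. Because $\MK\mid q$ we have $\mathcal A\subseteq\qq(\zeta_q)$, hence $K\cap\qq(\zeta_q)=\mathcal A$ and $[K(\zeta_q):\qq]=\NK\varphi(q)/\NA$; and since both $K/\qq$ and $\qq(\zeta_q)/\qq$ are Galois, the set of $\sigma\in\Gal(K(\zeta_q)/\qq)$ that are trivial on $K$ and act by $\zeta_q\mapsto\zeta_q^\rho$ on $\qq(\zeta_q)$ is either empty or a single conjugacy class of cardinality one, nonempty precisely when the image of $\rho$ in $(\zz/\MK\zz)^*$ lies in $G_{\MK}$. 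Hence an effective Chebotarev density theorem for $K(\zeta_q)/\qq$, valid uniformly for $q$ in the Siegel--Walfisz range $q\le(\log x)^C$ (the field $K$ being fixed, this holds with error $O(x\exp(-c\sqrt{\log x}))$ exactly as in the cyclotomic case, any Siegel zero being dealt with classically), gives
\[
\sum_{\substack{p\le x,\ p\equiv\rho\,(q)\\ p\text{ splits completely in }K}}\log p
=\frac{\NA}{\NK\,\varphi(q)}\,x+O\bigl(x\exp(-c\sqrt{\log x})\bigr)
\]
when $\rho\bmod\MK\in G_{\MK}$, and $\ll x\exp(-c\sqrt{\log x})$ otherwise. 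Summing the error over the $O((\log x)^{O(1)})$ residues $\rho$ and over $k\le K_0,\ n\le N$ leaves $O(x\exp(-c'\sqrt{\log x}))$. This is precisely where the argument departs from~\cite{FJKP}: there $K=\mathcal A$, the splitting condition is a congruence modulo $\MK$ and only Dirichlet characters are needed, whereas here one invokes Chebotarev over the generally non-Abelian field $K(\zeta_q)$. Note that the factor $\NA/\NK$ from the density is exactly compensated by the prefactor $\NK$ in $A_1(x;r)$, which is why the answer takes the shape $\tfrac{2\NA}{\pi}(\cdots)$ of~\eqref{avg const} for every $K$.

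Substituting the displayed estimate, the main term becomes $\NA x\sum_{k\le K_0}\tfrac1k\sum_{b\in G_{\MK}}\sum_{n\le N}\varphi(q)^{-1}n^{-1}\sum_{\rho}\legendre{d}{n}$, where $d=(r^2-4\rho)/k^2$ and $\rho$ runs over residues modulo $q$ reducing to $b$ modulo $\MK$ and satisfying the divisibility and discriminant conditions; a direct computation, using $r^2-dk^2=4p\equiv 4b\pmod{\gcd(4\MK,4nk^2)}$ and parallel to the bookkeeping in~\cite[Lemma~2.2]{DP:2004}, rewrites this as $\NA x\sum_{k\le K_0}\sum_{b\in G_{\MK}}\sum_{n\le N}\tfrac{c_k^{r,b,\MK}(n)}{nk\,\varphi([\MK,nk^2])}$ with $c_k^{r,b,\MK}(n)$ as in~\eqref{c fct defn}. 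To complete the $k$- and $n$-sums and simultaneously establish the absolute convergence of~\eqref{avg const}, one uses that $c_k^{r,b,\MK}(\cdot)$ is essentially multiplicative and that, at a prime power $\ell^j$ with $\ell\nmid 2\MK kr$, the auxiliary congruences are coprime to $\ell$, so $c_k^{r,b,\MK}(\ell^j)$ is a scalar multiple of $\sum_{a\bmod \ell^j}\legendre{a}{\ell^j}$, which vanishes for $j$ odd and equals $\ell^{j-1}(\ell-1)$ for $j$ even; consequently $c_k^{r,b,\MK}(n)$ vanishes unless $n$ is a perfect square times a divisor of a fixed power of $2\MK kr$, and on that support $|c_k^{r,b,\MK}(n)|\ll\varphi(n)$ up to the Euler factors at $\ell\mid 2\MK kr$. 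One then checks $\sum_n|c_k^{r,b,\MK}(n)|(n\varphi([\MK,nk^2]))^{-1}\ll_{\MK,r}k^{-1-\delta}$ for a fixed $\delta>0$, with a power-saving tail in $n$; hence $\sum_{b}\sum_k\sum_n$ converges absolutely, and truncating it at $k=K_0$ and $n=N$ (powers of $\log x$) costs only $O(1/(\log x)^\eta)$ after multiplication by $\NA x$. The genuine obstacle is the third paragraph—confining the conductor $q=[nk^2,\MK]$ of the prime sum to a range where an unconditional, uniform effective Chebotarev estimate over the non-Abelian $K(\zeta_q)/\qq$ is available, which is what forces the preliminary truncations of $k$ and $n$ and, in turn, the large-sieve mean value for $L(1,\chi_d)$; everything downstream is routine, if lengthy.
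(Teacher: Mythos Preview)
Your high-level architecture is sound, and you have correctly isolated the one genuinely new ingredient: the density of rational primes that split completely in $K$ and lie in a given class modulo $q$ is $\NA/(\NK\,\varphi(q))$ once $\MK\mid q$, and this is exactly what produces the factor $\NA$ in~\eqref{avg const}. However, your treatment of the $n$-truncation is \emph{not} what the paper (or~\cite{DP:2004}) does, and as written there is a gap.

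The paper follows~\cite[p.~193]{DP:2004} verbatim at this step: it replaces $L(1,\chi_{d_k(p)})$ by the exponentially smoothed series $\sum_{n\ge 1}\legendre{d_k(p)}{n}e^{-n/U}/n$ with error $O(|d_k(p)|^{7/32}/U^{1/2})$ coming from a contour shift and subconvexity, and then chooses $U=x/(\log x)^{5\eta+15}$. Thus $n$ runs essentially up to $x$, and the modulus $[\MK,nk^2]$ in the prime sum is far beyond any Siegel--Walfisz range. The accumulated Chebotar\"ev error is therefore controlled \emph{on average over the modulus} by Cauchy--Schwarz together with the Barban--Davenport--Halberstam--type estimate of Theorem~\ref{bdh gen} (imported from~\cite{Smi:2009}, stated for degree-$1$ primes of $K$ in the abelian extensions $K(\zeta_q)/K$). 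That BDH input is the analytic core of the argument and is precisely what replaces the Siegel--Walfisz step of~\cite{FJKP}.

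Your route instead truncates sharply at $N=(\log x)^{C}$, keeping the modulus $\ll(\log x)^{O(1)}$ so that a termwise effective Chebotar\"ev suffices and Theorem~\ref{bdh gen} is never invoked. If it works this is a genuine simplification. But it hinges on the second-moment bound $\sum_{|d|\le D}\bigl|\sum_{n>N}\chi_d(n)/n\bigr|^2\ll D(\log D)^{O(1)}/N$, which you say is ``exactly the step carried out in~\cite[pp.~193--199]{DP:2004}.'' It is not: those pages use the smoothing above with $U$ nearly $x$, not a log-power cutoff, and no such mean-value estimate appears there. The bound you want is plausible---splitting the tail at $n=D$, Heath-Brown's quadratic large sieve handles $N<n\le D$ over \emph{fundamental} discriminants, and P\'olya--Vinogradov handles $n>D$---but the $d_k(p)$ are not fundamental and the passage from the sparse set $\{d_k(p)\}$ to all $|d|\le D$ needs care. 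As written, this is a gap you would have to fill.

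A smaller point in the same spirit: your ``effective Chebotar\"ev for $K(\zeta_q)/\qq$ in the Siegel--Walfisz range'' is not free either, since $[K(\zeta_q):\qq]$ grows with $q$ and the unconditional Lagarias--Odlyzko error degrades accordingly. The paper sidesteps this by working upstairs with degree-$1$ primes of $K$ in the \emph{abelian} extensions $K(\zeta_q)/K$; your formulation over $\qq$ can be repaired by factoring through the finitely many fixed Artin characters of $K/\qq$ twisted by Dirichlet characters modulo $q$, but that too is work you have not supplied.
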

A sketch of the proof of Proposition~\ref{weighted avg of l-series} is 
given in Section~\ref{compute l-series avg}.  For the omitted details, we refer 
the reader to~\cite[pp. 193-199]{DP:2004} since it is similar.

\section{The average order}

We now use the results of Section~\ref{intermediate results} to compute 
the average order of $\pi_E^{r,1}(x)$.  That is, we prove 
Theorem~\ref{avg LT for K}.
We compute the average order by first converting it into a weighted sum of 
class numbers.  
Given a (not necessarily fundamental) discriminant $D<0$, 
we define the \textit{Hurwitz-Kronecker class number} of discriminant $D$ by
\begin{equation}\label{Hurwitz defn}
H(D):=2\sum_{\substack{k^2|D\\ \frac{D}{k^2}\equiv 0,1\pmod 4}}
\frac{h(D/k^2)}{w(D/k^2)},
\end{equation}
where $h(d)$ denotes the class number of the unique imaginary quadratic order of 
discriminant $d$ and $w(d)$ denotes the order of its unit group.

The following result of Deuring is the key to counting elliptic curves 
over a finite field.  See~\cite{Deu:1941} or~\cite[p.~654]{Len:1987}.
\begin{thm}[Deuring]\label{Deuring's thm}
Let $p$ be prime greater than $3$ and $r$ an integer satisfying $r^2-4p<0$.  Then 
\begin{equation*}
\sum_{\substack{\tilde E/\gf_p\\ \#\tilde E(\gf_p)=p+1-r}}\frac{1}{\#\Aut(\tilde E)}=\frac{1}{2}H(r^2-4p),
\end{equation*}
where the sum on the left is over the $\gf_p$-isomorphism classes of elliptic curves 
having exactly $p+1-r$ points and $\#\Aut(\tilde E)$ denotes the size of the automorphism 
group of any representative of the class $\tilde E$.
\end{thm}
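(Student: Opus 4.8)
The plan is to sort the elliptic curves appearing on the left according to their endomorphism ring and to identify each of the resulting counts via the theory of complex multiplication, which is Deuring's original method; for the finer points I would ultimately appeal to~\cite{Deu:1941} and~\cite[p.~654]{Len:1987}. Write $D:=r^2-4p<0$, and for $m<0$ with $m\equiv 0,1\pmod 4$ let $\mathcal O_m$ denote the imaginary quadratic order of discriminant $m$, so that $h(m)$ and $w(m)$ are as in~\eqref{Hurwitz defn}. First I would treat the principal case $r\ne 0$. Since $p>3$ we have $|r|<2\sqrt p<p$, so the Frobenius endomorphism of any $\tilde E/\gf_p$ with $\#\tilde E(\gf_p)=p+1-r$ has characteristic polynomial $X^2-rX+p$, of negative discriminant $D$; hence $\tilde E$ is ordinary and $\mathrm{End}(\tilde E)\isom\mathcal O_{D/k^2}$ for some positive integer $k$ with $k^2\mid D$ and $D/k^2\equiv 0,1\pmod 4$. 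Because the endomorphism ring of an ordinary curve over $\gf_p$ is already defined over $\gf_p$, we also get $\#\Aut(\tilde E)=\#\mathrm{End}(\tilde E)^{\times}=w(D/k^2)$. These two facts reduce the theorem to showing that, for each admissible $k$, there are exactly $h(D/k^2)$ classes $\tilde E$ with $\#\tilde E(\gf_p)=p+1-r$ and $\mathrm{End}(\tilde E)\isom\mathcal O_{D/k^2}$: summing the weight $1/w(D/k^2)$ over these and comparing with~\eqref{Hurwitz defn} yields precisely $\tfrac12 H(D)$.

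For the per-order count I would fix an admissible $k$, put $\mathcal O:=\mathcal O_{D/k^2}$ (note that $p\nmid r$, so $p$ splits in $\mathcal O$), and realize the set $\mathrm{Ell}(\mathcal O;r)$ of the classes in question as a torsor under the Picard group $\mathrm{Cl}(\mathcal O)$, acting by $\mathfrak a*\tilde E:=\tilde E/\tilde E[\mathfrak a]$ with $\tilde E[\mathfrak a]:=\bigcap_{\phi\in\mathfrak a}\ker\phi$. This action is well defined because $\mathfrak a$ may be chosen prime to $p$, an isogeny preserves the endomorphism order of an ordinary curve, and passage to the quotient by an $\mathcal O$-stable subgroup leaves the trace of Frobenius fixed. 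Freeness is a kernel-ideal computation: an isomorphism $\mathfrak a*\tilde E\isom\tilde E$, composed with the canonical isogeny $\tilde E\to\mathfrak a*\tilde E$, is an endomorphism of $\tilde E$ with kernel ideal $\mathfrak a$, so $\mathfrak a$ is principal. Transitivity is where Deuring's lifting theorem enters: I would lift $\tilde E$ to an elliptic curve with complex multiplication by $\mathcal O$ over a localization of the ring class field $H_{\mathcal O}$, use that over $\cc$ the $\mathrm{Cl}(\mathcal O)$-action on the $h(\mathcal O)$ isomorphism classes $\cc/\mathfrak a$ is simply transitive, and check that it is compatible with reduction modulo a fixed prime $\pp\mid p$ of $H_{\mathcal O}$ --- the choice of $\pp$, equivalently of one of the two primes of $\mathcal O$ above $p$, being exactly what selects $\#\tilde E(\gf_p)=p+1-r$ rather than $p+1+r$. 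This gives $\#\mathrm{Ell}(\mathcal O;r)=h(D/k^2)$ and disposes of the case $r\ne 0$.

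When $r=0$ the curves in question satisfy $\#\tilde E(\gf_p)=p+1$ and are therefore supersingular (as $p>3$), so that $\mathrm{End}_{\overline{\gf_p}}(\tilde E)$ is a maximal order in a quaternion algebra and the discussion above does not transcribe directly; in that case I would instead classify the supersingular curves over $\gf_p$ by the order $\mathrm{End}_{\gf_p}(\tilde E)$ inside $\qq(\sqrt{-p})$, run the corresponding part of Deuring's correspondence, and combine it with the classical evaluation of $H(-4p)$ to recover $\tfrac12 H(-4p)$ --- again referring to~\cite{Deu:1941}. I expect the sole genuine obstacle, in either case, to be the transitivity step: proving that every ordinary curve over $\gf_p$ with endomorphism ring $\mathcal O$ is the reduction of a curve with complex multiplication by $\mathcal O$, and that this reduction is injective on isomorphism classes. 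This is precisely Deuring's lifting theorem combined with the comparison of the characteristic-zero and characteristic-$p$ $\mathrm{Cl}(\mathcal O)$-actions; everything else is bookkeeping, which is why the statement is quoted here with a reference to~\cite{Deu:1941} and~\cite[p.~654]{Len:1987} rather than proved.
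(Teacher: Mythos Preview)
The paper does not prove this theorem; it states it and refers to~\cite{Deu:1941} and~\cite[p.~654]{Len:1987}. Your sketch is the classical Deuring argument---sort by the imaginary quadratic order $\mathcal O_{D/k^2}$, identify each fiber with a $\mathrm{Cl}(\mathcal O_{D/k^2})$-torsor via the lifting theorem, and read off $h(D/k^2)/w(D/k^2)$---and is essentially correct; you even anticipated that the paper would cite rather than prove it.

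One small remark on the ordinary case: you write that ``an isogeny preserves the endomorphism order of an ordinary curve,'' but this is only true for isogenies of degree prime to the conductor of $\mathcal O$ in the maximal order; in general an isogeny can move between different orders in $\qq(\sqrt D)$. For the torsor argument you only need that the quotient by $\tilde E[\mathfrak a]$, with $\mathfrak a$ an invertible $\mathcal O$-ideal, stays in $\mathrm{Ell}(\mathcal O;r)$, and that is true because $\tilde E[\mathfrak a]$ is an $\mathcal O$-stable subgroup of order $\N\mathfrak a$ prime to the conductor (choosing representatives appropriately). So the claim you actually use is fine; just don't state it in the stronger form.
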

\begin{rmk}
It is important to note that our definition of $H(D)$ is defined as a weighted sum of ordinary 
class numbers $h(d)$.  Thus, our statement of Deuring's Theorem looks more 
like that given in~\cite[p.~654]{Len:1987} as opposed to that given 
in~\cite[Theorem 4.6]{Sch:1987}.
However, our definition is exactly twice as big as the definition used in~\cite{Len:1987} and the 
statement of Deuring's Theorem is adjusted accordingly.
\end{rmk}

\begin{prop}\label{1st avg order prop}
If $\mathcal P_r(x)$ is the set of primes defined in~\textsection\ref{L-fct sect}, then
\begin{equation*}
\frac{1}{\#\ecbox}\sum_{E\in\ecbox}\pi_E^{r,1}(x)=
\frac{\NK}{2}\sum_{p\in\mathcal P_r(x)}\frac{H(r^2-4p)}{p}
+O(\mathcal{E}_0(x;\mathscr B)),
\end{equation*}
where
\begin{equation*}
\mathcal E_0(x;\ecbox):=1
+\frac{\sqrt x/\log x}{\mathscr{V}_{\mathrm{min}}(\ecbox)}
+\left(\frac{1}{\mathscr{V}_1(\ecbox)}+\frac{1}{\mathscr{V}_2(\ecbox)}\right)(x\log x)^{\NK}
+\frac{(x\log x)^{2\NK}}{\mathscr{V}(\ecbox)}.
\end{equation*}
\end{prop}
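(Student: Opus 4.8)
The plan is to start from the definition $\pi_E^{r,1}(x) = \#\{\N\p \le x : a_\p(E) = r, \deg\p = 1\}$ and interchange the order of summation, writing
\begin{equation*}
\sum_{E\in\ecbox}\pi_E^{r,1}(x) = \sum_{\substack{\p,\ \deg\p=1\\ \N\p\le x}} \#\{E\in\ecbox : E \text{ has good reduction at }\p,\ a_\p(E)=r\}.
\end{equation*}
First I would show that the primes $\p$ which fail to lie in $\mathscr P_r$ (i.e.\ those with $\N\p \le B(r)$, those lying over a rational prime that does not split completely in $K$, those dividing $\prod_j\gamma_j$, or those ramifying in $K(\zeta_{\MK})$) contribute only $O(1)$ to the average after division by $\#\ecbox$: there are only finitely many of bounded norm and of the ramified/dividing type, and for a degree-$1$ prime over a non-split $p$ we need a separate (routine) bound, but since we are restricting to $\deg\p = 1$ we are effectively already summing over $p \in \mathcal P_r(x)$ up to such negligible sets. (If $K/\qq$ is Galois, a rational prime either splits completely or has no degree-$1$ factors at all, so in fact the degree-$1$ primes are precisely those over completely split $p$, up to the finitely many ramified ones.)

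Next, for each good prime $\p \in \mathscr P_r(x)$ lying over $p \in \mathcal P_r(x)$, I would partition the inner count according to the $\gf_p$-isomorphism class of the reduction $E^\p$. Grouping the models $E_{a,b}$ over $\ints_K/\p \cong \gf_p$ by isomorphism class and applying Lemma~\ref{count isom reductions}, which gives $\#\ecbox(E_{a,b},\p) = \frac{p-1}{p^2\#\Aut_p(E_{a,b})}\mathscr V(\ecbox) + (\text{error})$, the main term becomes
\begin{equation*}
\frac{p-1}{p^2}\mathscr V(\ecbox) \sum_{\substack{\tilde E/\gf_p\\ \#\tilde E(\gf_p)=p+1-r}} \frac{1}{\#\Aut(\tilde E)} = \frac{p-1}{p^2}\mathscr V(\ecbox)\cdot\frac12 H(r^2-4p)
\end{equation*}
by Deuring's Theorem (Theorem~\ref{Deuring's thm}); note $B(r) \ge r^2/4$ ensures $r^2 - 4p < 0$ for $p \in \mathcal P_r(x)$, so Deuring applies. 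Dividing by $\#\ecbox = \mathscr V(\ecbox)(1 + O(1/\mathscr V_{\min}))$ turns the coefficient $\frac{p-1}{p^2}$ into $\frac1p$ up to an acceptable multiplicative error, and summing over $p \in \mathcal P_r(x)$ yields the claimed main term $\frac{\NK}{2}\sum_{p\in\mathcal P_r(x)}\frac{H(r^2-4p)}{p}$ — the factor $\NK$ appearing because each completely split $p$ carries exactly $\NK$ degree-$1$ primes $\p$, each contributing the same count.

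The main obstacle is the bookkeeping of the error terms. Summing the error in Lemma~\ref{count isom reductions} over all isomorphism classes over $\gf_p$ and then over $p \le x$ requires the crude bounds $\sum_{\tilde E/\gf_p} 1 \ll p$ and $H(r^2-4p) \ll \sqrt p\log p$ (via $H(D) \ll \sqrt{|D|}\log|D|$, which follows from the class number formula and $L(1,\chi_d)\ll\log|d|$), together with $\#\ecbox \asymp \mathscr V(\ecbox)$; one must check that each of the four pieces of $\mathcal E_0(x;\ecbox)$ — the $1$, the $\frac{\sqrt x/\log x}{\mathscr V_{\min}}$, the $(\frac1{\mathscr V_1}+\frac1{\mathscr V_2})(x\log x)^{\NK}$, and the $\frac{(x\log x)^{2\NK}}{\mathscr V}$ — genuinely dominates the corresponding accumulated error, and that the discrepancy between $\frac{p-1}{p^2}$ and $\frac1p$, once multiplied by $\sum_p H(r^2-4p) \ll x^{3/2}$ and divided by $\mathscr V(\ecbox)$, is absorbed. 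I expect this to be entirely mechanical once the interchange and the application of Deuring are in place, so no genuinely new idea is needed beyond careful estimation.
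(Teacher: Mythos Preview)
Your proposal is correct and follows essentially the same route as the paper: interchange the order of summation, restrict to $\mathscr P_r(x)$ at cost $O(1)$ (using, as you note, that in a Galois extension the degree-$1$ primes lie exactly over the completely split rational primes), group by $\gf_p$-isomorphism class, apply Lemma~\ref{count isom reductions} together with Deuring's theorem, and then collect the errors via the bound $\sum_{p\le x}H(r^2-4p)\ll x^{3/2}$ and partial summation. One small caution: when you speak of ``summing the error over all isomorphism classes over $\gf_p$'' and invoke $\sum_{\tilde E/\gf_p}1\ll p$, make sure you actually sum only over the classes with $\#\tilde E(\gf_p)=p+1-r$, of which there are $\asymp H(r^2-4p)\ll\sqrt p\log p$; using the cruder count $\ll p$ would inflate, e.g., the $\mathscr V_{\min}$-error to $\frac{x/\log x}{\mathscr V_{\min}}$ rather than the stated $\frac{\sqrt x/\log x}{\mathscr V_{\min}}$.
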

\begin{proof}
Since $\mathscr{P}_r$ contains all but finitely many degree $1$ primes of $K$, we have
\begin{equation}
\begin{split}\label{interchange sums}
\frac{1}{\#\ecbox}\sum_{E\in\ecbox}\pi_E^{r,1}(x)
&=\frac{1}{\#\ecbox}\sum_{E\in\ecbox}
\sum_{\substack{\p\in\mathscr{P}_r(x)\\ a_\p(E)=r}}1 +O(1)
=\frac{1}{\#\ecbox}\sum_{\substack{\p\in\mathscr P_r(x)}}
\sum_{\substack{E\in\ecbox\\ a_\p(E)=r}}1 +O(1)\\
&=\frac{1}{\#\ecbox}\sum_{\substack{\p\in\mathscr P_r(x)}}
\sum_{\substack{\tilde E/\gf_p\\ \#\tilde{E}(\gf_p)=p+1-r}}\#\ecbox(\tilde E, \p)+
O\left(\frac{1}{\#\ecbox}\sum_{\p\in\mathscr P_r(x)}
\sum_{\substack{E\in\ecbox\\ E_\p\text{ sing.}}}1\right),
\end{split}
\end{equation}
where $p=\N\p$ and the inner sum of the final big-$O$ term is over the $E\in\ecbox$ whose 
reductions are singular modulo $\p$.  
Using character sums in a manner similar to the proof of 
Lemma~\ref{count isom reductions} (see \textsection\ref{reduction count proof sect}), we obtain a 
bound on the big-$O$ term which is smaller than $\mathcal E_0(x;\ecbox)$.  Therefore, we will
concentrate on the main term of~\eqref{interchange sums}.

By Lemma~\ref{count isom reductions}, we see that 
$\#\ecbox(\tilde E,\p)$ depends only on the rational prime $p$ lying below $\p$.  
Since there are exactly $\NK$ degree $1$ primes $\p$ lying above the same rational prime $p$,
we obtain
\begin{equation*}
\frac{1}{\#\ecbox}\sum_{\substack{\p\in\mathscr P_r(x)}}
\sum_{\substack{\tilde E/\gf_p\\ \#\tilde{E}(\gf_p)=p+1-r}}\#\ecbox(\tilde E, \p)
=\frac{\NK}{\#\ecbox}\sum_{\substack{p\in\mathcal P_r(x)}}
\sum_{\substack{\tilde E/\gf_p\\ \#\tilde{E}(\gf_p)=p+1-r}}\#\ecbox(\tilde E, \p).
\end{equation*}
Finally, using Lemma~\ref{count isom reductions} to estimate $\#\ecbox(\tilde E,\p)$ and 
Theorem~\ref{Deuring's thm} to count $\gf_p$-isomorphism classes of elliptic curves with 
exactly $p+1-r$ points,
we obtain
\begin{equation}
\frac{1}{\#\ecbox}\sum_{\substack{\p\in\mathscr P_r(x)}}
\sum_{\substack{\tilde E/\gf_p\\ \#\tilde{E}(\gf_p)=p+1-r}}\#\ecbox(\tilde E, \p)
=\frac{\NK}{2}\sum_{p\in\mathcal P_r(x)}\frac{H(r^2-4p)}{p}
+\sum_{p\in\mathcal P_r(x)}\mathcal E_p(x;\ecbox),
\end{equation}
where
\begin{equation*}
\begin{split}
\mathcal E_p(x;\ecbox):=\frac{H(r^2-4p)}{p^2}+
\frac{H(r^2-4p)}{p\mathscr{V}_{\mathrm{min}}(\ecbox)}
&+p^{\NK-3/2}(\log p)^{\NK}H(r^2-4p)
\left(\frac{1}{\mathscr{V}_1(\ecbox)}+\frac{1}{\mathscr V_2(\ecbox)}\right)\\
&+\frac{p^{2\NK-3/2}(\log p)^{2\NK}H(r^2-4p)}{\mathscr{V}(\ecbox)}.
\end{split}
\end{equation*}
In~\cite[p.178]{DP:1999}, we find the bound
\begin{equation*}
\sum_{p\le x}H(r^2-4p)\ll x^{3/2}.
\end{equation*}
Using this bound, together with partial summation and standard estimates, we obtain 
\begin{equation}
\sum_{p\in\mathcal P_r(x)}\mathcal E_p(x;\ecbox)\ll
1
+\frac{\sqrt x/\log x}{\mathscr{V}_{\mathrm{min}}(\ecbox)}
+\left(\frac{1}{\mathscr{V}_1(\ecbox)}+\frac{1}{\mathscr{V}_2(\ecbox)}\right)(x\log x)^{\NK}
+\frac{(x\log x)^{2\NK}}{\mathscr{V}(\ecbox)}.
\end{equation}
This completes the proof of the proposition.
\end{proof}

\begin{prop}\label{2nd avg order prop}
For every $\eta>0$,
\begin{equation*}
\frac{\NK}{2}\sum_{p\in\mathcal P_r(x)}\frac{H(r^2-4p)}{p}
=\mathfrak{C}_{K,r,1}\pi_{1/2}(x)+O\left(\frac{\sqrt x}{(\log x)^{1+\eta}}\right).
\end{equation*}
\end{prop}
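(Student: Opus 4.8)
The plan is to relate the Hurwitz-Kronecker class number sum to the weighted $L$-value sum $A_1(x;r)$ from Proposition~\ref{weighted avg of l-series}, and then apply partial summation to extract the $\pi_{1/2}(x)$ main term. First I would expand $H(r^2-4p)$ using its definition~\eqref{Hurwitz defn} as a sum over $k$ with $k^2\mid (r^2-4p)$ and $(r^2-4p)/k^2\equiv 0,1\pmod 4$, so that
\begin{equation*}
\frac{\NK}{2}\sum_{p\in\mathcal P_r(x)}\frac{H(r^2-4p)}{p}
=\NK\sum_{k\le 2\sqrt x}\sum_{\substack{p\in\mathcal S_k(x;r)}}\frac{h(d_k(p))}{w(d_k(p))p}.
\end{equation*}
Next I would invoke the analytic class number formula for the imaginary quadratic order of discriminant $d_k(p)<0$, namely $\frac{h(d)}{w(d)}=\frac{\sqrt{|d|}}{2\pi}L(1,\chi_d)$ (adjusting for the conductor of non-fundamental discriminants, which contributes only a bounded factor absorbed into the Euler product defining the constant). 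Since $|d_k(p)|=(4p-r^2)/k^2$ and $p\le x$, we have $\sqrt{|d_k(p)|}=\frac{\sqrt{4p-r^2}}{k}=\frac{2\sqrt p}{k}(1+O(r^2/p))$, so the sum becomes
\begin{equation*}
\frac{\NK}{\pi}\sum_{k\le 2\sqrt x}\frac{1}{k}\sum_{p\in\mathcal S_k(x;r)}\frac{\sqrt p\, L(1,\chi_{d_k(p)})}{p}+(\text{error})
=\frac{\NK}{\pi}\sum_{k\le 2\sqrt x}\frac{1}{k}\sum_{p\in\mathcal S_k(x;r)}\frac{L(1,\chi_{d_k(p)})}{\sqrt p}+(\text{error}).
\end{equation*}

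The point now is that the inner double sum is exactly $A_1(x;r)$ with $\log p$ replaced by $1/\sqrt p$ — i.e., it is a $\frac{1}{\sqrt t\,\log t}$-weighted version of the same counting function whose $\log p$-weighted partial sums are controlled by Proposition~\ref{weighted avg of l-series}. So I would define $T(t):=\NK\sum_{k\le 2\sqrt t}\frac{1}{k}\sum_{p\in\mathcal S_k(t;r)}L(1,\chi_{d_k(p)})\log p$, which by Proposition~\ref{weighted avg of l-series} satisfies $T(t)=\frac{\pi}{2}\mathfrak C_{K,r,1}\,t+O(t/(\log t)^{\eta'})$ for any $\eta'>0$, and then recover our target sum by Riemann-Stieltjes partial summation against $T$:
\begin{equation*}
\frac{\NK}{\pi}\sum_{k\le 2\sqrt x}\frac{1}{k}\sum_{p\in\mathcal S_k(x;r)}\frac{L(1,\chi_{d_k(p)})}{\sqrt p}
=\frac{1}{\pi}\int_{B(r)}^x\frac{dT(t)}{t^{1/2}\log t}+(\text{lower order}).
\end{equation*}
Substituting $T(t)=\frac{\pi}{2}\mathfrak C_{K,r,1}t+R(t)$ with $R(t)\ll t/(\log t)^{\eta'}$, the main term contributes $\frac{\mathfrak C_{K,r,1}}{2}\int_{B(r)}^x\frac{dt}{\sqrt t\,\log t}=\mathfrak C_{K,r,1}\pi_{1/2}(x)+O(1)$ by~\eqref{pi1/2}, while the $R(t)$ term, after integration by parts, is $\ll \frac{\sqrt x}{(\log x)^{1+\eta'}}+\int_{B(r)}^x\frac{R(t)}{t^{3/2}\log t}dt\ll\frac{\sqrt x}{(\log x)^{1+\eta'}}$. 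Choosing $\eta'=\eta$ gives the claimed error bound.

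The main obstacle is bookkeeping the two error sources so that neither exceeds $\sqrt x/(\log x)^{1+\eta}$. The first is the $O(r^2/p)$ discrepancy between $\sqrt{|d_k(p)|}$ and $2\sqrt p/k$, together with the tail where $4p-r^2<0$ (empty for $p>B(r)$, which is why $\mathcal P_r$ was defined with the cutoff $B(r)<\N\p$); summing $L(1,\chi_{d_k(p)})\ll\log p$ against the extra factor $r^2/p$ over $p\le x$, $k\le 2\sqrt x$ gives $\ll r^2(\log x)^{O(1)}$, which is acceptable. The second, more delicate point is that $A_1(x;r)$ is a sum over $k\le 2\sqrt x$ with truncation $\mathcal S_k(x;r)$ depending on $x$; to apply partial summation cleanly I would first check that the contribution of pairs $(k,p)$ with $k>x^{1/2-\epsilon}$ (equivalently $|d_k(p)|$ small) is negligible — this follows because $L(1,\chi_d)\ll\log|d|$ and the number of such pairs is controlled by the same class-number bound $\sum_{p\le x}H(r^2-4p)\ll x^{3/2}$ already cited in the proof of Proposition~\ref{1st avg order prop}. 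Once the $k$-range and truncation are handled uniformly, the partial summation against $T(t)$ is routine. Since this argument closely parallels~\cite[Lemma 2.2 and pp.~193--199]{DP:2004}, I would present the reduction to $A_1(x;r)$ in detail and refer to that source for the remaining estimates.
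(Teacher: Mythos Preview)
Your proposal is correct and follows essentially the same route as the paper: expand $H(r^2-4p)$ via the Dirichlet class number formula, reduce to the sum $\frac{\NK}{\pi}\sum_k\frac{1}{k}\sum_{p\in\mathcal S_k(x;r)}\frac{L(1,\chi_{d_k(p)})}{\sqrt p}$, and then recover the main term from $A_1(x;r)$ by partial summation combined with Proposition~\ref{weighted avg of l-series}. The paper applies partial summation to the inner sum over $p$ for each fixed $k$ and then sums over $k$, whereas you apply it directly to $T(t)=A_1(t;r)$; since the constraint $k^2\mid(4p-r^2)$ already forces $k<2\sqrt p\le 2\sqrt t$, the truncation $k\le 2\sqrt t$ is vacuous and the two summation orders are literally the same computation.

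Two small points of over-caution: the parenthetical about ``adjusting for the conductor of non-fundamental discriminants'' is unnecessary, since the identity $h(d)/w(d)=\frac{\sqrt{|d|}}{2\pi}L(1,\chi_d)$ with $\chi_d$ the Kronecker symbol already holds for all negative discriminants (this is exactly what the paper invokes from~\cite[p.~513]{IK:2004}); and your ``more delicate point'' about the range $k>x^{1/2-\epsilon}$ is moot for the same reason that the $k$-truncation is vacuous. Neither affects the validity of your argument.
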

\begin{proof}
From the definition of the Kronecker-Hurwitz class number~\eqref{Hurwitz defn} 
and Dirichlet's class number formula~\cite[p. 513]{IK:2004}, we have
\begin{equation}\label{apply class number formula}
\begin{split}
\frac{\NK}{2}\sum_{p\in\mathcal P_r(x)}\frac{H(r^2-4p)}{p}
&=\frac{\NK}{2}\sum_{p\in\mathcal P_r(x)}
\sum_{\substack{k^2|(r^2-4p)\\ d_k(p)\equiv 0,1\pmod 4}}
\frac{\sqrt{4p-r^2}}{\pi kp}L(1,\chi_{d_k(p)})\\
&=\frac{\NK}{\pi}\sum_{k\le 2\sqrt x}\frac{1}{k}\sum_{p\in\mathcal S_k(x;r)}
\frac{L(1,\chi_{d_k(p)})}{\sqrt p}
+O\left(
\sum_{p\le x}\frac{\log p}{p^{3/2}}\sum_{k^2|(r^2-4p)}1
\right),
\end{split}
\end{equation}
where we have used the fact that $\sqrt{4p-r^2}=2\sqrt p+O(p^{-1/2})$ together 
with the bound $L(1,\chi_d)\ll\log d$.  See~\cite[p. 120]{IK:2004} for example.
Since $\sum_{k^2|(4p-r^2)}1\ll p^\delta$ for any $\delta>0$, the big-$O$ term is bounded.

Partial summation applied to the inner sum of the main term yields
\begin{equation}
\begin{split}
\sum_{p\in\mathcal S_k(x;r)}\frac{L(1,\chi_{d_k(p)})}{\sqrt p}
=&\frac{1}{\sqrt x\log x}\sum_{p\in\mathcal S_k(x;r)}L(1,\chi_{d_k(p)})\log p\\
&+\int_{B(r)}^x
\frac{\sum_{p\in\mathcal S_k(t;r)}L(1,\chi_{d_k(p)})\log p}{2t^{3/2}\log t+t^{3/2}(\log t)^2}\mathrm dt.
\end{split}
\end{equation}
Substituting this back into~\eqref{apply class number formula}, applying 
Proposition~\ref{weighted avg of l-series}, and using~\eqref{pi1/2}, we have
\begin{equation*}
\begin{split}
\frac{\NK}{2}\sum_{p\in\mathcal P_r(x)}\frac{H(r^2-4p)}{p}
&=\frac{1}{\pi\sqrt x\log x}A_1(x;r)
-\int_{B(r)}^x\frac{A_1(t;r)}{2t^{3/2}\log t+t^{3/2}(\log t)^2}\mathrm dt\\
&=\frac{\mathfrak{C}_{K,r,1}}{2}\left[
\frac{\sqrt x}{\log x}
+\int_2^x\frac{\mathrm dt}{2\sqrt t\log t}
+\int_2^x\frac{\mathrm dt}{\sqrt t(\log t)^2}
\right]
+O\left(
\frac{\sqrt x}{(\log x)^{1+\eta}}
\right)\\
&=\mathfrak{C}_{K,r,1}\pi_{1/2}(x)+O\left(\frac{\sqrt x}{(\log x)^{1+\eta}}\right).
\end{split}
\end{equation*}
\end{proof}
Theorem~\ref{avg LT for K} now follows by combining the results of
Propositions~\ref{1st avg order prop}  and~\ref{2nd avg order prop}.

\section{The variance}

In this section, we bound the variance of $\pi_E^{r,1}(x)$.  That is, we 
give the proof of Theorem~\ref{variance thm} using the results of 
Section~\ref{intermediate results}.
\begin{proof}[Proof of Theorem~\ref{variance thm}]
Expanding the square and applying Theorem~\ref{avg LT for K}, we find that
\begin{equation}\label{use variance formula}
\begin{split}
\frac{1}{\#\ecbox}\sum_{E\in\ecbox}\left|\pi_{E}^{r,1}(x)-\mathfrak{C}_{K,r,1}\pi_{1/2}(x)\right|^2
=\frac{1}{\#\ecbox}\sum_{E\in\ecbox}\left[\pi_E^{r,1}(x)\right]^2
   &-\left[\mathfrak C_{K,r,1}\pi_{1/2}(x)\right]^2 \\
   &+O\left(\frac{\mathcal E(x;\ecbox)\sqrt x}{\log x}\right).
 \end{split}
\end{equation}
In the sum on the right-hand side of~\eqref{use variance formula}, 
we again expand the square and group terms according to pairs of 
prime ideals of equal norm and pairs of unequal norm.  We obtain
\begin{equation}\label{split up the primes}
\frac{1}{\#\ecbox}\sum_{E\in\ecbox}\left[\pi_E^{r,1}(x)\right]^2
 =\frac{1}{\#\ecbox}\sum_{E\in\ecbox}
       \left[\sum_{\substack{\N\p,\N\p'\le x\\ \N\p=\N\p'\\ a_\p(E)=a_{\p'}(E)=r\\ \deg\p=\deg\p'=1}}1
       +\sum_{\substack{\N\p,\N\p'\le x\\ \N\p\neq\N\p'\\ a_\p(E)=a_{\p'}(E)=r\\ \deg\p=\deg\p'=1}}1\right].
\end{equation}
We bound the sum over the prime pairs of equal norm by observing that
\begin{equation*}
\frac{1}{\#\ecbox}\sum_{E\in\ecbox}
    \sum_{\substack{\N\p,\N\p'\le x\\ \N\p=\N\p'\\ a_\p(E)=a_{\p'}(E)=r\\ \deg\p=\deg\p'=1}}1
\le\frac{1}{\#\ecbox}\sum_{E\in\ecbox}
    \sum_{\substack{\N\p\le x\\ a_\p(E)=r\\ \deg\p=1}}\NK
=\frac{\NK}{\#\ecbox}\sum_{E\in\ecbox}\pi_E^{r,1}(x).
\end{equation*}
Thus, applying Theorem~\ref{avg LT for K}, we have
\begin{equation}\label{bound primes of equal norm}
\frac{1}{\#\ecbox}\sum_{E\in\ecbox}
    \sum_{\substack{\N\p,\N\p'\le x\\ \N\p=\N\p'\\ a_\p(E)=a_{\p'}(E)=r\\ \deg\p=\deg\p'=1}}1
\ll\frac{1}{\#\ecbox}\sum_{E\in\ecbox}\pi_E^{r,1}(x)
\ll\frac{\sqrt x}{\log x}+\mathcal E(x;\ecbox).
\end{equation}

For the primes of unequal norm, we argue as in the proof of Proposition~\ref{1st avg order prop}
and write
\begin{equation}\label{primes of unequal norm}
\begin{split}
\frac{1}{\#\ecbox}\sum_{E\in\ecbox}
   \sum_{\substack{\N\p,\N\p'\le x\\ \N\p\neq\N\p'\\ a_\p(E)=a_{\p'}(E)=r\\ \deg\p=\deg\p'=1}}1
=&\sum_{\substack{\p,\p'\in\mathscr P_r(x)\\ \N\p\neq\N\p'}}
    \sum_{\substack{\tilde E/\gf_p, \tilde E'/\gf_{p'}\\ \#E(\gf_p)=p+1-r\\ \#E'(\gf_{p'})=p'+1-r}}
    \#\ecbox(E,\p; E',\p')\\
   &+O\left(
   	\frac{1}{\#\ecbox}\sum_{\substack{\p,\p'\in\mathscr P_r(x)\\ \N\p\neq\N\p'}}
	\sum_{\substack{E\in\ecbox\\ E_\p\text{ or } E_{\p'}\text{ sing.}}}
	  \right).
\end{split}
\end{equation}
As in the proof of Proposition~\ref{1st avg order prop}, the contribution of the big-$O$ term is 
negligible.  Applying Lemma~\ref{count isom reduction pairs} and 
Theorem~\ref{Deuring's thm} to estimate the main term 
of~\eqref{primes of unequal norm}, we see that the contribution from the pairs of primes of 
unequal norm is equal to
\begin{equation}\label{convert unequal sum to class number sum}
\frac{1}{\#\ecbox}\sum_{E\in\ecbox}
   \sum_{\substack{\N\p,\N\p'\le x\\ \N\p\neq\N\p'\\ a_\p(E)=a_{\p'}(E)=r\\ \deg\p=\deg\p'=1}}1
=\frac{\NK^2}{4}\sum_{\substack{p,p'\in\mathcal P_r(x)\\ p\neq p'}}
  \frac{H(r^2-4p)H(r^2-4p')}{pp'}
  +O\left(
  	\mathcal E'(x;\ecbox)
       \right),
\end{equation}
where
\begin{equation*}
\mathcal E'(x;\ecbox):=\frac{\sqrt x\log\log x}{\log x}
   +\frac{x/(\log x)^2}{\mathscr{V}_{\mathrm{min}}(\ecbox)}
   +\left(
   	\frac{1}{\mathscr V_1(\ecbox)}+\frac{1}{\mathscr{V}_2(\ecbox)}
      \right)(x\log x)^{2\NK}
    +\frac{(x\log x)^{4\NK}}{\mathscr{V}(\ecbox)}.
\end{equation*}
By Proposition~\ref{2nd avg order prop}, the double sum over primes 
in~\eqref{convert unequal sum to class number sum} is equal to
\begin{equation}\label{apply avg order prop to compute double Hurwitz sum}
\left[\frac{\NK}{2}\sum_{p\in\mathcal P_r(x)}\frac{H(r^2-4p)}{p}\right]^2
-\frac{\NK^2}{4}\sum_{p\in\mathcal P_r(x)}\frac{H(r^2-4p)^2}{p^2}
=\left[\mathfrak C_{K,r,1}\pi_{1/2}(x)\right]^2
  +O\left(
  	\frac{x}{(\log x)^{2+\eta}}
        \right).
\end{equation}
Combining~\eqref{split up the primes},
~\eqref{bound primes of equal norm},
~\eqref{convert unequal sum to class number sum},
and~\eqref{apply avg order prop to compute double Hurwitz sum}, we have
\begin{equation*}
\frac{1}{\#\ecbox}\sum_{E\in\ecbox}\left[\pi_E^{r,1}(x)\right]^2
=\left[\mathfrak C_{K,r,1}\pi_{1/2}(x)\right]^2
	+O\left(
		\frac{x}{(\log x)^{2+\eta}}+\mathcal E'(x;\ecbox)
	     \right).
\end{equation*}
Substituting this 
into~\eqref{use variance formula}, we obtain the desired result.
\end{proof}

\section{Counting elliptic curves whose reductions are isomorphic over $\gf_p$}\label{reduction count proof sect}

In this section, we will prove the estimates of Lemma~\ref{count isom reductions} by extending 
standard character sum techniques as in~\cite{FM:1996} or~\cite{JY:2006}.  
The main difference is that we have to extend 
the domain of our characters to $\ints_K$.  Since we are representing $\ints_K$ as an 
$\NK$-dimensional $\zz$-module, it is necessary to adapt their proof to higher dimensions.

\begin{proof}[Proof of Lemma~\ref{count isom reductions}]
We begin by recalling that $E_{a,b}$ and $E_{a',b'}$ are isomorphic over $\gf_p$ if and only if 
there exists a $u\in\gf_p^*$ so that $a=u^4a'$ and $b=u^6b'$.  Now, note that 
$\#\mathrm{Aut}_p(E_{a,b})=\#\{u\in\gf_p^*: a=au^4\text{ and } b=bu^6\}$.  Therefore, we may write
\begin{equation}
\#\ecbox(E_{a,b},\p)=\frac{1}{\#\mathrm{Aut}_p(E_{a,b})}\sum_{u\in(\ints_K/\p)^*}
\sum_{\substack{\alpha\in\mathcal B(\vect a_1,\vect b_1)\\ \p|(\alpha-au^4)}}
\sum_{\substack{\beta\in\mathcal B(\vect a_2,\vect b_2)\\ \p|(\beta-bu^6)}}1.
\end{equation}
Refer back to equation~\eqref{int box} on page~\ref{int box} for the definition of 
$\mathcal B(\vect a,\vect b)$.  To estimate this sum, we write
\begin{equation}
\#\ecbox(E_{a,b},\p)=\frac{1}{\#\mathrm{Aut}_p(E_{a,b})}
\sum_{u\in(\ints_K/\p)^*}
\sum_{\substack{\alpha\in\mathcal B(\vect a_1,\vect b_1)\\ \beta\in\mathcal{B}(\vect a_2,\vect b_2)}}
\frac{1}{p^2}\sum_{(\psi,\psi')}\psi(\alpha-au^4)\psi'(\beta-bu^6),
\end{equation}
where the innermost sum is over all pairs $(\psi,\psi')$ of additive characters on 
$\ints_K/\p\isom\gf_p$.  The main term results from when $\psi=\psi'=\psi_0$, the trivial 
character, which contributes
$\frac{p-1}{p^2\#\mathrm{Aut}_p(E_{a,b})}\mathscr{V}(\ecbox)
+O\left(\frac{\mathscr{V}(\ecbox)}{p\mathscr{V}_{\mathrm{min}}(\ecbox)}\right)$.
The remaining terms are bounded by
\begin{equation}
\frac{1}{p^2\#\mathrm{Aut}_p(E_{a,b})}\sum_{(\psi,\psi')\ne (\psi_0,\psi_0)}
\left|
\sum_{u\in(\ints_K/\p)^*}\overline\psi(au^4)\overline{\psi'}(bu^6)
\right|
\left|
\sum_{\alpha\in\mathcal B(a_1,b_1)}\psi(\alpha)
\right|
\left|
\sum_{\beta\in\mathcal B(a_2,b_2)}\psi(\beta)
\right|.
\end{equation}

Since in the line above at least one of $\psi$ and $\psi'$ is not trivial, we will assume for the 
moment that it is $\psi$.  Using well known facts about additive characters modulo $p$, 
we may write
\begin{equation*}
\overline\psi(au^4)\overline{\psi'}(bu^6)
=\overline\psi(au^4+mbu^6)
\end{equation*}
for some $m\in\gf_p$.
We think of the expression $au^4+mbu^6$ as a polynomial in $u$ over $\ints_K/\p\isom\gf_p$ 
of degree either $4$ or $6$.  Thus, since $\p\dnd 6$, we may apply 
Weil's Theorem~\cite[p. 223]{LN:1997}, which yields
\begin{equation*}
\left|
\sum_{u\in(\ints_K/\p)^*}\overline\psi(au^4+mbu^6)
\right|\ll\sqrt p.
\end{equation*}

We now estimate $\sum_{\psi}\left|\sum_{\alpha\in\mathcal B(a_1,b_1)}\psi(\alpha)\right|$.
If $\psi=\psi_0$, then 
$\left|\sum_{\alpha\in\mathcal B(a_1,b_1)}\psi(\alpha)\right|\ll\mathscr{V}_1(\ecbox)$ and 
it remains to bound 
$\sum_{\psi\ne\psi_0}\left|\sum_{\alpha\in\mathcal B(a_1,b_1)}\psi(\alpha)\right|$.
We now write each $\alpha$ in terms of our fixed basis 
$\mathcal B=\{\gamma_1,\dots,\gamma_{\NK}\}$ as $\alpha=\sum_{j=1}^{\NK}c_j\gamma_j$.
Since we assumed that $\p\dnd\prod_{j=1}^{\NK}\gamma_j$, each $\gamma_j$ is nonzero 
modulo $\p$.  Thus,
\begin{equation*}
\begin{split}
\sum_{\psi\ne\psi_0}\left|\sum_{\alpha\in\mathcal B(a_1,b_1)}\psi(\alpha)\right|
&=\sum_{\psi\ne\psi_0}\left|\sum_{\alpha\in\mathcal B(a_1,b_1)}
\prod_{j=1}^{\NK}\psi(c_j\gamma_j)\right|\\
&=
\sum_{\psi\ne\psi_0}
\prod_{j=1}^{\NK}\left|
\sum_{c_j=\ceil{a_{1,j}-b_{1,j}}}^{\floor{a_{1,j}+b_{1,j}}}
\psi(c_j\gamma_j)\right|\\
&\le
\prod_{j=1}^{\NK}
\sum_{\psi\ne\psi_0}
\left|
\sum_{c_j=\ceil{a_{1,j}-b_{1,j}}}^{\floor{a_{1,j}+b_{1,j}}}
\psi(c_j\gamma_j)\right|\\
&=\prod_{j=1}^{\NK}
\sum_{\psi\ne\psi_0}
\left|
\sum_{c_j=\ceil{a_{1,j}-b_{1,j}}}^{\floor{a_{1,j}+b_{1,j}}}
\psi(c_j)\right|\\
&\ll p^{\NK}(\log p)^{\NK}.
\end{split}
\end{equation*}

The same line of reasoning suffices to estimate 
$\sum_{\psi}\left|\sum_{\beta\in\mathcal B(a_2,b_2)}\psi(\beta)\right|$.
The result now follows by appropriately combining all of these estimates.
\end{proof}

\section{Averaging special values of Dirichlet $L$-functions}\label{compute l-series avg}
In this section, we sketch the proof of Proposition~\ref{weighted avg of l-series}.
That is, we show how to compute
\begin{equation}
A_1(x;r)=\NK\sum_{k\le 2\sqrt x}\frac{1}{k}
\sum_{p\in\mathcal{S}_k(x;r)}L(1,\chi_{d_k(p)})\log p.
\end{equation}
Similar averages of the special values of Dirichlet $L$-functions arise in 
previous work on the average Lang-Trotter problem.
The general strategy has been to reorder the summation so that one arrives at a sum 
that can be easily estimated using the Prime Number Theorem for primes in arithmetic 
progressions.  For the case of degree $1$ primes of a number field $K$, one needs 
to estimate sums of the form
\begin{equation*}
\sum_{\substack{p\le x\\ p\text{ splits comp. in } K\\ p\equiv a\pmod q}}\log p
\end{equation*}
for essentially every possible value of $a$ and $q$.  

When $K/\qq$ is Abelian, as in~\cite{FJKP}, the condition that $p$ splits completely 
in $K$ is determined by congruence conditions.  That is, there exists an integer 
$\MK$ and a subgroup $G_{\MK}\subseteq(\zz/\MK\zz)^*$ so that $p$ splits 
completely in $K$ if and only if $p\equiv b\pmod{\MK}$ for some $b\in G_{\MK}$.
One may check that this definition of $G_{\MK}$ agrees with the one given 
on page~\pageref{def GmK} in the case that $K=\mathcal A$ is an Abelian 
extension of $\qq$.
Thus, if $K/\qq$ is Abelian, one may rewrite the above sum as
\begin{equation*}
\sum_{\substack{p\le x\\ p\text{ splits comp. in } K\\ p\equiv a\pmod q}}\log p
=
\sum_{b\in G_{\MK}}\sum_{\substack{p\le x\\ p\equiv b\pmod{\MK}\\ p\equiv a\pmod q}}\log p.
\end{equation*}
The inner sum can then be estimated by the Prime Number Theorem for primes in 
arithmetic progressions when $a\equiv b\pmod{(q,\MK)}$.  Otherwise, the sum is empty.

When $K/\qq$ is a non-Abelian Galois extension, it is not likely that one will be able to 
write down a list of congruence conditions that determine exactly when a rational prime 
will split completely in $K$.  Essentially, the remedy is apply the Chebotar\"ev Density 
Theorem to the appropriate Galois extension of $K$.
In order to transform our sum into a form appropriate for application of the 
Chebotar\"ev Density Theorem, we make the following simple observation.
For each rational prime $p$ splitting completely in $K$, there are 
exactly $\NK$ primes $\p$ of $K$ lying above $p$, all satisfying $\N\p=p$.  Therefore,
\begin{equation}\label{back upstairs}
\sum_{\substack{p\le x\\ p\text{ splits comp. in } K\\ p\equiv a\pmod q}}\log p
=\frac{1}{\NK}\sum_{\substack{\N\p\le x\\ \deg\p=1\\ \N\p\equiv a\pmod q}}\log\N\p
+O(1).
\end{equation}
The sum on the right hand side is now in appropriate form to be estimated by the 
Chebotar\"ev Density Theorem.  We now explain this application.

For each positive integer $q$, let $\zeta_q$ be a primitive $q$-th root of unity and 
 let $G_q$ denote the image of the natural map
\begin{equation}\label{natural map}
\begin{diagram}
\node{\Gal(K(\zeta_q)/K)}\arrow{e,J}
\node{\Gal(\qq(\zeta_q)/\qq)}\arrow{e,t}{\sim}
\node{(\zz/q\zz)^*.}
\end{diagram}
\end{equation}
Thus, for each positive integer $q$, we have a canonical identification of 
$\Gal(K(\zeta_q)/K)$ with a certain subgroup of $(\zz/q\zz)^*$, which we denote by $G_q$.
For the case that $q=\MK$, it is easy to check that this definition of $G_{\MK}$ agrees 
with the one given on page~\pageref{def GmK}.  Indeed,
$G_{\MK}\isom\Gal(K(\zeta_{\MK})/K)\isom\Gal(\qq(\zeta_{\MK})/\mathcal A)$.

For each prime ideal $\p$ of $K$ not ramifying in $K(\zeta_q)$, it is easy to check that 
the Frobenius automorphism at $\p$ is determined by the residue of $\N\p$ modulo $q$.
Thus, provided that $a$ is in the image of the natural map~\eqref{natural map}, it 
follows from Chebotar\"ev Density Theorem that 
\begin{equation*}
\sum_{\substack{\N\p\le x\\ \N\p\equiv a\pmod q}}\log\N\p
\sim\frac{1}{\varphi_K(q)}x,
\end{equation*}
where $\varphi_K(q):=\#G_q$.
Otherwise, the sum is empty.
Since the contribution from the primes $\p$ of degree greater than or equal to $2$ is 
$O(\sqrt x)$ (with an implied constant depending on $K$, but not on $q$), 
it also follows that
\begin{equation*}
\sum_{\substack{\N\p\le x\\ \deg\p=1\\ \N\p\equiv a\pmod q}}\log\N\p
\sim\frac{1}{\varphi_K(q)}x.
\end{equation*}
Applying this in~\eqref{back upstairs} yields the asymptotic identity
\begin{equation*}
\sum_{\substack{p\le x\\ p\text{ splits comp. in } K\\ p\equiv a\pmod q}}\log p
=\frac{1}{\NK}\sum_{\substack{\N\p\le x\\ \deg\p=1\\ \N\p\equiv a\pmod q}}\log\N\p
\sim\frac{1}{\NK\varphi_K(q)}x.
\end{equation*}
Essentially, by replacing our sum over rational primes by the appropriate
sum over prime ideals of $K$ and 
noting that the degree $1$ primes comprise a density $1$ subset of the set 
of all prime ideals of $K$, we 
are free to ``ignore" the condition on the degree.  Note that this trick will not work if one wants 
to count degree $2$ primes satisfying congruence conditions.  

We will need the following result in order to control the error incurred by invoking the 
Chebotar\"ev Density Theorem to estimate sums of the form
\begin{equation*}
\theta_K(x;1,q,a):=\sum_{\substack{\N\p\le x\\ \deg\p=1\\ \N\p\equiv a\pmod q}}\log\N\p.
\end{equation*} 
\begin{thm}\label{bdh gen}
For any $M>0$,
\begin{equation*}
\sum_{q\le Q}\sum_{a\in G_{q}}\left(\theta_K(x;1,q,a)-\frac{x}{\varphi_K(q)}\right)^2
\ll xQ\log x,
\end{equation*}
provided that $x(\log x)^{-M}\le Q\le x$.
\end{thm}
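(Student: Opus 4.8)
The statement to prove, Theorem~\ref{bdh gen}, is a Barban–Davenport–Halberstam type estimate for the prime-counting function $\theta_K(x;1,q,a)$ attached to degree~$1$ primes of $K$ lying in a prescribed residue class modulo $q$. The plan is to reduce it to the classical Barban–Davenport–Halberstam theorem for rational primes in arithmetic progressions by exploiting the elementary dictionary between degree~$1$ primes of $K$ and rational primes that split completely in $K$.

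\textbf{Step 1: Pass from prime ideals to rational primes.} As observed in~\eqref{back upstairs}, for a residue class $a\in G_q$ we have
\begin{equation*}
\theta_K(x;1,q,a)=\NK\sum_{\substack{p\le x\\ p\text{ splits comp. in }K\\ p\equiv a\pmod q}}\log p+O(\log x),
\end{equation*}
the error accounting both for the bounded contribution from prime powers/small primes and the $O(1)$ discrepancy between $\log\N\p$ and $\log p$ for the $\NK$ primes above each split $p$. Squaring, the $O(\log x)$ term contributes $O((\log x)^2)$ to each summand, hence $O(Q(\log x)^3)$ in total over the at most $Q^2$ pairs $(q,a)$ with $a\in G_q$ — wait, more carefully, there are $\sum_{q\le Q}\#G_q\ll Q^2$ such pairs, and cross terms are handled by Cauchy–Schwarz against the main term; in any case this is negligible against $xQ\log x$. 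So it suffices to bound $\sum_{q\le Q}\sum_{a\in G_q}\big(\NK\,\vartheta^*(x;q,a)-x/\varphi_K(q)\big)^2$, where $\vartheta^*(x;q,a)$ denotes the sum of $\log p$ over $p\le x$ that split completely in $K$ and satisfy $p\equiv a\pmod q$.

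\textbf{Step 2: Remove the splitting condition via characters of $\Gal(K(\zeta_q)/\qq)$, or directly sieve it out.} The cleanest route is to write the indicator that $p$ splits completely in $K$ (for $p$ unramified) in terms of the Frobenius class in $\Gal(K/\qq)$, and note that the joint condition ``$p$ splits completely in $K$ and $p\equiv a\pmod q$'' is, after excluding ramified primes, exactly a Chebotarev condition in the compositum $K(\zeta_q)$: it singles out a union of conjugacy classes in $\Gal(K(\zeta_q)/\qq)$, and since $G_q\cong\Gal(K(\zeta_q)/K)$ is identified with a subgroup of $(\zz/q\zz)^*$, the relevant set has size $\varphi(q)/\varphi_K(q)\cdot(\text{fixed factor})$, consistent with the main term $x/(\NK\varphi_K(q))$ predicted by Chebotarev. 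Rather than invoke Chebotarev with error terms uniform in $q$ (which is exactly what one cannot afford on average), I would instead bound the variance by the classical mean-value theorem. Concretely, detect the splitting condition through the principal character trick: for a suitable fixed modulus $m_0$ (a multiple of the discriminant of $K$'s Galois closure over $\qq$) together with ray-class data, ``$p$ splits completely in $K$'' is implied by finitely many congruence/Frobenius conditions, and one writes $\vartheta^*(x;q,a)$ as a bounded linear combination of $\vartheta(x;[q,m'],a')$ — ordinary prime-in-progression sums to moduli $[q,m']$ with $m'\mid m_0$ fixed.

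\textbf{Step 3: Invoke Barban–Davenport–Halberstam.} Each term $\big(\vartheta(x;q',a')-x/\varphi(q')\big)^2$ with $q'=[q,m']\le m_0 Q$, summed over $a'\pmod{q'}$ and over $q'\le m_0 Q$, is $\ll x(m_0 Q)\log x\ll_{K} xQ\log x$ by the classical Barban–Davenport–Halberstam theorem, valid in the range $x(\log x)^{-M'}\le m_0 Q\le x$ for any $M'>0$; choosing $M'=M$ (and noting $m_0$ is a fixed constant depending only on $K$) gives the asserted range $x(\log x)^{-M}\le Q\le x$. One has to check that replacing $x/\varphi(q')$ by $x/(\NK\varphi_K(q))$ inside the square, after taking the fixed linear combination, produces exactly the claimed main term; this is a bookkeeping identity relating $\varphi_K(q)$ to the index of $G_q$ in $(\zz/q\zz)^*$ and the fixed local factors at primes dividing $m_0$, and it introduces only an $O(x/q)$ type discrepancy whose square sums to $O(x^2/Q)\ll xQ\log x$ in the stated range.

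\textbf{Main obstacle.} The delicate point is \emph{uniformity in $q$} when decoupling the ``splits completely in $K$'' condition from the ``$p\equiv a\pmod q$'' condition. If $q$ shares common factors with the conductor $m_0$ of $K$'s abelian part (or, in the genuinely non-abelian case, if $q$ interacts with the ramification of the Galois closure), the naive product $\Gal(K(\zeta_q)/K)\times(\text{stuff})$ degenerates and the combinatorial identity for the main term becomes $q$-dependent. Handling this requires splitting the sum over $q$ according to $\gcd(q,m_0)$, treating each of the boundedly many cases separately, and in each case reducing to B–D–H to a modulus that is a bounded multiple of $q$; the contribution of the ``bad'' moduli (those with large gcd) is controlled trivially because there are few of them and each carries the square of a quantity of size $O(x/q + x(\log x)^{-A})$. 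I expect this case analysis, together with verifying that the implied constants depend only on $K$ and not on $M$ in an uncontrolled way, to be the technical heart of the argument; everything else is a direct appeal to the classical mean-value theorem for primes in progressions.
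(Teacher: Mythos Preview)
Your reduction to the classical Barban--Davenport--Halberstam theorem has a genuine gap at Step~2, and it is precisely the obstacle the paper flags at the start of Section~\ref{compute l-series avg}: when $K/\qq$ is non-abelian, the condition ``$p$ splits completely in $K$'' is \emph{not} captured by any finite list of congruence conditions on $p$. Your proposed fix---``for a suitable fixed modulus $m_0$ \dots together with ray-class data''---cannot work, because ray class fields of $\qq$ are abelian over $\qq$, so no combination of congruences modulo divisors of a fixed $m_0$ will cut out the completely split primes of a non-abelian $K$. Consequently the expression of $\vartheta^*(x;q,a)$ as a bounded linear combination of ordinary $\vartheta(x;[q,m'],a')$ simply does not exist in general, and Step~3 has nothing to feed on. Your ``Main obstacle'' paragraph correctly senses trouble but the remedy you sketch (splitting over $\gcd(q,m_0)$) addresses only the interaction of $q$ with the abelian part $\mathcal A$ of $K$; it does nothing for the genuinely non-abelian piece $\Gal(K/\mathcal A)=[G,G]$.

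The paper does not prove Theorem~\ref{bdh gen} from scratch; it cites~\cite{Smi:2009}, where the analogous statement for $\psi_K(x;q,a)$ is established, and observes that passing to $\theta_K(x;1,q,a)$ costs only the negligible contribution of prime powers and higher-degree primes. The approach of~\cite{Smi:2009} works \emph{intrinsically over $K$}: one runs the large sieve with Hecke characters of $K$ (equivalently, with characters of $\Gal(K(\zeta_q)/K)\cong G_q$), so the splitting condition in $K/\qq$ never enters---one is simply counting prime ideals of $K$ in ray classes. That is the key idea your proposal is missing: stay upstairs and use the large sieve for $K$, rather than attempting to descend to $\qq$ and then re-encode the splitting behaviour.
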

\begin{rmk}
This result is a slight modification of the main result of~\cite{Smi:2009} and can be 
proved similarly with only minor alterations to the proof.  In~\cite{Smi:2009}, the 
main result is stated with $\theta_K(x;1,q,a)$ replaced by the Chebychev function
\begin{equation*}
\psi_K(x;q,a):=\sum_{\substack{\N\p^m\le x\\ \N\p^m\equiv a\pmod q}}\log\N\p.
\end{equation*}
The key observation when altering the proof is that the contribution from prime powers and 
higher degree primes is negligible.
\end{rmk}

\begin{rmk} It should also be possible to achieve an 
asymptotic version of this result along the same lines as~\cite{Smi:2010}.
\end{rmk}

\begin{proof}[Proof of Proposition~\ref{weighted avg of l-series}]
As in~\cite[p. 193]{DP:2004}, 
we introduce a parameter $U$ (to be chosen later) and begin with the identity
\begin{equation*}
L(1,\chi_{d_k(p)})=\sum_{n\ge 1}\legendre{d_k(p)}{n}\frac{1}{n}
=\sum_{n\ge 1}\legendre{d_k(p)}{n}\frac{e^{-n/U}}{n}
+O\left(\frac{|d_k(p)|^{7/32}}{U^{1/2}}\right).
\end{equation*}
Whence, if 
\begin{equation}\label{U cond 1}
U\ge x^{7/16}(\log x)^{2\eta},
\end{equation}
then substitution and interchanging sums yields
\begin{equation}\label{intro U}
A_1(x;r)=\NK\sum_{k\le 2\sqrt x}\frac{1}{k}\sum_{n\ge 1}\frac{e^{-n/U}}{n}
\sum_{p\in\mathcal S_k(x;r)}\legendre{d_k(p)}{n}\log p+O\left(\frac{x}{(\log x)^\eta}\right).
\end{equation}
We now introduce another parameter $V$ and observe that contribution to~\eqref{intro U} from the 
``large" values of $k$ is
\begin{equation*}
\sum_{V<k\le 2\sqrt x}\frac{1}{k}\sum_{n\ge 1}\frac{e^{-n/U}}{n}
\sum_{p\in\mathcal S_k(x;r)}\legendre{d_k(p)}{n}\log p
\ll (x\log x)V^{-2}\log U
\ll \frac{x}{(\log x)^\eta}
\end{equation*}
if
\begin{align}
V&\ge (\log x)^{(\eta +2)/2},\label{V cond}\\
U&\le x.\label{U cond 2}
\end{align}
We also observe that for $U$ satisfying~\eqref{U cond 1}, the contribution from the 
``large" $n$ is
\begin{equation}
\sum_{k\le V}\frac{1}{k}\sum_{n\ge U\log U}\frac{e^{-n/U}}{n}
\sum_{p\in\mathcal S_k(x;r)}\legendre{d_k(p)}{n}\log p
\ll\frac{\log x}{U\log U}
\sum_{k\le V}\frac{1}{k}\sum_{\substack{m\le x\\ k^2|(4m-r^2)}}1
\ll\frac{x}{(\log x)^\eta}.
\end{equation}
Therefore, we have
\begin{equation}\label{large n and k removed}
A_1(x;r)=\NK\sum_{k\le V}\frac{1}{k}\sum_{n\le U\log U}\frac{e^{-n/U}}{n}
\sum_{p\in\mathcal S_k(x;r)}\legendre{d_k(p)}{n}\log p+O\left(\frac{x}{(\log x)^\eta}\right).
\end{equation}

Now recall the notation of page~\pageref{prime sets}.
In particular, recall that the definition of $\mathscr P_r(x)$ explicitly excludes any 
prime ideals $\p$ which ramify in the fixed Galois extension $K(\zeta_{\MK})/K$.
If $\p$ is a prime of $K$ that does not ramify in $K(\zeta_{\MK})$,
then it follows (by calculating the Frobenius at $\p$ and applying the map~\eqref{natural map}) 
that $\N\p\equiv b\pmod{\MK}$ for some $b\in G_{\MK}$.  
For each pair $n,k$, we regroup the terms of the innermost sum in~\eqref{large n and k removed} 
to see that
\begin{equation}\label{regroup terms}
\begin{split}
\NK\sum_{p\in\mathcal S_k(x;r)}\legendre{d_k(p)}{n}\log p
&=\sum_{\substack{a\in(\zz/4n\zz)\\ a\equiv 0,1\pmod 4}}\legendre{a}{n}
\sum_{\substack{p\in\mathcal S_k(x;r)\\ d_k(p)\equiv a\pmod{4n}}}\NK\log p\\
&=\sum_{\substack{a\in(\zz/4n\zz)\\ a\equiv 0,1\pmod 4}}\legendre{a}{n}
\sum_{b\in G_{\MK}}
\sum_{\substack{p\in\mathcal S_k(x;r)\\ d_k(p)\equiv a\pmod{4n}\\ p\equiv b\pmod\MK}}\NK\log p\\
&=\sum_{\substack{a\in(\zz/4n\zz)\\ a\equiv 0,1\pmod 4\\ 4|(r^2-ak^2)}}\legendre{a}{n}
\sum_{b\in G_{\MK}}
\sum_{\substack{\p\in\mathscr P_r(x)\\ \N\p\equiv\frac{r^2-ak^2}{4}\pmod{nk^2}\\ \N\p\equiv b\pmod\MK}}\log\N\p.
\end{split}
\end{equation}
Note that if $4|(r^2-ak^2)$ and $(r^2-ak^2)/4$ is not coprime to $nk^2$, then there can be at most 
finitely many degree one prime ideals $\p$ satisfying the two conditions on the innermost sum 
in the last line of~\eqref{regroup terms}.  Furthermore, this can only happen when the greatest 
common divisor of $nk^2$ and the least positive residue of $(r^2-ak^2)/4$ is itself a prime,
say $\ell$, and $\N\p=\ell$.  Now, if $\ell$ is an odd prime dividing $k$, 
then $\ell^2$ divides both $(r^2-ak^2)/4$ and $nk^2$.
Thus, this situation can only arise from $2$ and those primes dividing $n$.  
Whence the last line of~\eqref{regroup terms} is equal to
\begin{equation}\label{remove finitely many primes}
\sum_{\substack{a\in(\zz/4n\zz)\\ a\equiv 0,1\pmod 4\\ (r^2-ak^2,4nk^2)=4}}\legendre{a}{n}
\sum_{b\in G_{\MK}}
\sum_{\substack{\p\in\mathscr P_r(x)\\ \N\p\equiv\frac{r^2-ak^2}{4}\pmod{nk^2}\\ \N\p\equiv b\pmod\MK}}\log\N\p+O\left(\sum_{\substack{\ell|n\\ \ell\text{ prime}}}\log\ell\right).
\end{equation}
Now, we interchange the outer two sums and note that the two conditions on the innermost 
sum are contradictory unless $4b\equiv r^2-ak^2\pmod{(4\MK,4nk^2)}$.  Therefore, 
the main term of~\eqref{remove finitely many primes} is equal to 
\begin{equation}\label{ensure conditions are consistent}
\sum_{b\in G_{\MK}}
\sum_{\substack{
	a\in(\zz/4n\zz)\\ 
	a\equiv 0,1\pmod 4\\ 
	(r^2-ak^2,4nk^2)=4\\ 
	4b\equiv r^2-ak^2\pmod{(4\MK,4nk^2)}
	}}
\legendre{a}{n}
\sum_{\substack{\N\p\le x\\ \deg\p=1\\ \N\p\equiv\frac{r^2-ak^2}{4}\pmod{nk^2}\\ \N\p\equiv b\pmod\MK}}\log\N\p+O(1/k^2),
\end{equation}
where the big-$O$ accounts for the prime ideals with norm less than $B(r)$
and those dividing the different of $K/\qq$.
See the definitions on page~\pageref{prime sets}.
Now, the two conditions on the innermost sum are equivalent via the Chinese Remainder 
Theorem to a single condition modulo the least common multiple $[\MK,nk^2]$.
Therefore,~\eqref{large n and k removed},~\eqref{regroup terms},
~\eqref{remove finitely many primes},~\eqref{ensure conditions are consistent}, 
and the Chebotar\"ev Density Theorem applied the prime ideals of $K$ satisfying the 
conditions of the innermost sum of~\eqref{ensure conditions are consistent}
 imply that
\begin{equation}\label{apply Chebotarev}
\begin{split}
A_1(x;r)=&x\sum_{b\in G_{\MK}}
\sum_{\substack{n\le U\log U,\\ k\le V}}
\frac{e^{-n/U}c_k^{r,b,\MK}(n)}{nk\varphi_K([\MK,nk^2])}
+O\left(
\frac{x}{(\log x)^\eta}
\right)\\
&+O\left(
\sum_{k\le V}\frac{1}{k}\sum_{n\le U\log U}\sum_{h\in G_{[\MK,nk^2]}}
\frac{e^{-n/U}}{n}\left|E_K(x;1,[\MK,nk^2],h)\right|
\right),
\end{split}
\end{equation}
where $c_k^{r,b,\MK}(n)$ is the function defined by equation~\eqref{c fct defn} on 
page~\pageref{c fct defn}, and for $h\in G_q$
\begin{equation*}
E_K(x;1,q,h):=\theta_K(x;1,q,h)-\frac{x}{\varphi_K(q)}.
\end{equation*}

Facts from Galois theory imply that $G_{[\MK,nk^2]}$ is a quotient of 
$G_{\MK nk^2}$; whence, via the triangle inequality, we have
\begin{equation*}
\sum_{h\in G_{[\MK,nk^2]}}\left|E_K(x;1,[\MK,nk^2],h)\right|
\le\sum_{h\in G_{\MK nk^2}}\left|E_K(x;1,\MK nk^2,h)\right|.
\end{equation*}
We now choose
\begin{align*}
U&=\frac{x}{(\log x)^{5\eta+15}},\\
V&=(\log x)^{(\eta+3)/2}
\end{align*}
and note that this choice is in accordance with~\eqref{U cond 1},~\eqref{V cond},
and~\eqref{U cond 2}.
Thus, by the Cauchy-Schwarz inequality and Theorem~\ref{bdh gen}, we have
\begin{equation*}
\begin{split}
\sum_{k\le V}\frac{1}{k}&\left[\sum_{n\le U\log U}\sum_{h\in G_{[\MK,nk^2]}}
\frac{e^{-n/U}}{n}\left|E_K(x;1,[\MK,nk^2],h)\right|\right]\\
&\le\sum_{k\le V}\frac{1}{k}
\left[
\sum_{n\le U\log U}\frac{\varphi_K(\MK nk^2)}{n^2}
\right]^{1/2}
\left[
\sum_{n\le U\log U}\sum_{h\in G_{\MK nk^2}}E_K(x;1,\MK nk^2,h)^2
\right]^{1/2}\\
&\ll V\sqrt{\log U}\left[
\sum_{q\le\MK V^2U\log U}\sum_{h\in G_q}E_K(x;1,q,h)^2
\right]^{1/2}\\
&\ll V\sqrt{\log U}\sqrt{xV^2U\log U\log x}\\
&\ll\frac{x}{(\log x)^\eta}
\end{split}
\end{equation*}
since $x(\log x)^{-M}\le V^2U\log U\le x$, say with $M=4\eta+11$.
Therefore, equation~\eqref{apply Chebotarev} becomes
\begin{equation}\label{simplify after bdh}
A_1(x;r)=
x\sum_{b\in G_{\MK}}
\sum_{\substack{n\le U\log U,\\ k\le V}}
\frac{e^{-n/U}c_k^{r,b,\MK}(n)}{nk\varphi_K([\MK,nk^2])}
+O\left(
\frac{x}{(\log x)^\eta}
\right)
\end{equation}

Recall the definition of $\mathcal A$ from the first paragraph of Section~\ref{avg order const sect},
and observe that since $\MK|[\MK,nk^2]$, we have 
\begin{equation*}
\mathcal A=\qq^{\text{cyc}}\cap K=\qq(\zeta_{\MK})\cap K\subseteq\qq(\zeta_{[\MK,nk^2]}).
\end{equation*}
Thus, we have the isomorphism
$G_{[\MK,nk^2]}\isom\Gal(\qq(\zeta_{[\MK,nk^2]})/\mathcal A)$, and recalling the definition of 
$\NA$, we have the identity
\begin{equation*}
\varphi([\MK,nk^2])=\NA\varphi_K([\MK,nk^2]).
\end{equation*}
Hence, equation~\eqref{simplify after bdh} becomes
\begin{equation}
A_1(x;r)=
x\NA\sum_{b\in G_{\MK}}
\sum_{\substack{n\le U\log U,\\ k\le V}}
\frac{e^{-n/U}c_k^{r,b,\MK}(n)}{nk\varphi([\MK,nk^2])}
+O\left(
\frac{x}{(\log x)^\eta}
\right).
\end{equation}

The final step of the proof is to show that
\begin{equation*}
\sum_{b\in G_{\MK}}
\sum_{\substack{n\le U\log U,\\ k\le V}}
\frac{e^{-n/U}c_k^{r,b,\MK}(n)}{nk\varphi([\MK,nk^2])}
=
\sum_{b\in G_{\MK}}
\sum_{\substack{n\ge 1,\\ k\ge 1}}
\frac{c_k^{r,b,\MK}(n)}{nk\varphi([\MK,nk^2])}
+O\left(\frac{1}{(\log x)^\eta}\right)
\end{equation*}
which, as a side effect, demonstrates the absolute convergence of the double infinite 
sum in~\eqref{avg const}.  This is done in a manner similar to~\cite[pp. 197-199]{DP:2004}.
\end{proof}

\bibliographystyle{plain}
\bibliography{references}
\end{document}